\documentclass{amsart}
\usepackage{amssymb}
\usepackage{amsmath}
\usepackage{amsfonts}
\usepackage{hyperref}
\usepackage{yfonts}
\usepackage{yhmath}
\usepackage{framed}
\DeclareMathOperator{\sgn}{sgn}
\newtheorem{theorem}{Theorem}[section]

\newtheorem{prop}[theorem]{Proposition}

\newtheorem{definition}[theorem]{Definition}
\newtheorem{remark}[theorem]{Remark}

\newcommand{\R}{\mathbb{R}}

\newcommand{\T}{\mathcal T}
\newcommand{\gT}{\textfrak{T}}
\newcommand{\bx}{{\bf{\xi}}}
\newcommand{\bc}{\textbf{1}}
\newcommand{\p}{\partial}
\newcommand{\al}{\alpha}

\newcommand{\tm}{{\widetilde{\mu}}}

\begin{document}
\title[Unconditional well-posedness for the Kawahara equation]{Unconditional well-posedness for the Kawahara equation}

\author{Dan-Andrei Geba and Bai Lin}

\address{Department of Mathematics, University of Rochester, Rochester, NY 14627}
\email{dangeba@math.rochester.edu}
\address{Department of Mathematics, University of Rochester, Rochester, NY 14627}
\email{blin13@u.rochester.edu}
\date{}

\begin{abstract}
This article is concerned with the unconditional well-posedness for the Kawahara equation on the real line and shows that this holds true for initial data in $L^2(\R)$. This is achieved by applying an infinite iteration scheme of normal form reductions.
\end{abstract}

\subjclass[2000]{35B30, 35Q53}
\keywords{Kawahara equation, well-posedness, unconditional uniqueness, normal form.}

\maketitle

\section{Introduction}

In this paper, our focus is on studying the Cauchy problem associated to the Kawahara equation,
\begin{equation}
\begin{cases}
\p_tu+\beta \p_x^3u+\alpha \p_x^5u+\p_x(u^2)\,=\,0, \qquad u=u(t,x): \mathbb{R}\times \R \to \R,\\
u(0,x)\,=\,u_0(x),\qquad u_0\in H^s(\R),\\
\end{cases}
\label{main}
\end{equation}
where $\alpha\neq 0$ and $\beta$ are real parameters. A renormalization process allows one to work with $\alpha=-1$ and $\beta=0$, $1$, or $-1$. The Kawahara equation is known as a generalized KdV equation modeling arbitrarily small perturbations of solitary waves with tails that have oscillatory structure (e.g. Kawahara \cite{K72}, Hunter-Scheurle \cite{HS88}). 

A natural object to investigate for evolution equations is the well-posedness (WP) of the associated Cauchy problem. The local theory for \eqref{main} has received considerable interest, with contributions from Cui-Deng-Tao \cite{CDT06}, Wang-Cui-Deng \cite{WCD07}, Chen-Li-Miao-Wu \cite{CLMW09}, and Chen-Guo \cite{CG11}. It was completed by Kato \cite{K11}, who showed that \eqref{main} is locally well-posed (LWP) if $s\geq -2$ and ill-posed (IP) if $s<-2$. The latter is due to the associated flow map being discontinuous in the corresponding topologies. Following Kato's result, Okamoto \cite{O17} proved IP in the same regime by exhibiting a norm inflation phenomenon. In what concerns the global well-posedness (GWP) for \eqref{main}, the best result is also due to Kato \cite{K13-2} who showed that this is valid when $s\geq -38/21$. Previous GWP results are due to Yan-Li \cite{YL10} and Chen-Guo \cite{CG11}.

The natural solution space for our Cauchy problem is either
\begin{equation}
X=C([-T,T]; H^s(\R)), \quad T>0,\label{xt}
\end{equation}
or 
\begin{equation}
X=C(\R; H^s(\R)). 
\label{xr}
\end{equation}
However, for any of the WP results mentioned above, the solution was obtained to be unique only in $X\cap Y$, where $Y$ is an additional functional space, and this is due to the analytic toolbox used in the argument. Hence, an interesting topic to study about \eqref{main} is its \emph{unconditional} WP, which means WP with the uniqueness of solution derived directly in $X$.  Our main result provides an answer in this direction.

\begin{theorem}
The Cauchy problem \eqref{main} is unconditionally GWP in $L^2(\R)$.\label{main-th}
\end{theorem}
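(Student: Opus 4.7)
I would first reduce Theorem~\ref{main-th} to unconditional uniqueness on a local interval. Multiplying \eqref{main} by $u$ and integrating in $x$ yields $\|u(t)\|_{L^2}^2 = \|u_0\|_{L^2}^2$, so combining Kato's $L^2$-LWP \cite{K11} with this conservation law produces a global-in-time solution $u_\ast$ in his auxiliary class. It therefore suffices to prove that any $u \in C([-T,T]; L^2(\R))$ solving \eqref{main} with datum $u_0$ coincides with $u_\ast$ on $[-T,T]$.

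For this step, the strategy is an infinite iteration of normal form reductions in the spirit of Babin--Ilyin--Titi and of Guo--Kwon--Oh. Passing to the interaction variable $\wh{v}(t,\xi) = e^{-it\omega(\xi)}\wh{u}(t,\xi)$, where $\omega(\xi) = \xi^5 + \beta\xi^3$ is the dispersion relation after the renormalization $\alpha = -1$, recasts \eqref{main} as
\[
\p_t \wh{v}(t,\xi) \,=\, -i\xi \int_{\xi_1+\xi_2 = \xi} e^{-it\Phi}\, \wh{v}(t,\xi_1)\wh{v}(t,\xi_2)\, d\xi_1,
\]
with resonance function
\[
\Phi \,=\, \omega(\xi) - \omega(\xi_1) - \omega(\xi_2) \,=\, \xi\xi_1\xi_2\bigl(3\beta + 5(\xi_1^2 + \xi_1\xi_2 + \xi_2^2)\bigr).
\]
On the non-resonant region the identity $e^{-it\Phi} = -(i\Phi)^{-1}\p_t e^{-it\Phi}$ permits integration by parts in $t$: the quadratic Duhamel term is replaced by a boundary term plus a cubic correction whose multiplier carries the favourable factor $\xi/\Phi$. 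Iterating produces an expansion indexed by binary trees; the term of size $n$ is an $n$-linear form in $v$ with multiplier equal to a nested product of $\xi/\Phi$ factors. The technical core of the proof is to establish uniform multilinear $L^2$-bounds on these operators, with geometric decay in $n$, so that the resulting infinite series converges absolutely in $C([-T,T]; L^2)$.

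The main obstacle is the near-resonant region, where $\Phi$ is small. Since $\Phi$ already contains the factor $\xi\xi_1\xi_2$, the low-frequency degeneracy on the coordinate axes is compensated at each tree level by the $\xi$ supplied by $\p_x(u^2)$. For $\beta \geq 0$ the algebraic factor $3\beta + 5(\xi_1^2+\xi_1\xi_2+\xi_2^2)$ is strictly positive and no further resonance appears; for $\beta = -1$ this factor vanishes on an ellipse in $(\xi_1,\xi_2)$-space, and its contribution must be isolated and bounded directly by a bilinear $L^2$-estimate without integration by parts, using the transversality of the curve. Since the multilinear operators are symmetric in their arguments, the same bounds apply to the difference of any two $L^2$-solutions $u_1, u_2$ and yield an inequality $\|u_1-u_2\|_{C([-T',T']; L^2)} \leq C\, T'^{\delta} \|u_1-u_2\|_{C([-T',T']; L^2)}$ on a short interval $[-T',T'] \subset [-T,T]$, forcing $u_1 \equiv u_2$ and finishing the proof.
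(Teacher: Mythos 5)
Your overall strategy---reduce to unconditional uniqueness, pass to the interaction representation, and run an infinite iteration of normal form reductions indexed by binary trees---is the same as the paper's, and the opening reduction (Kato's LWP plus $L^2$ conservation gives a global reference solution, so only local uniqueness in $C([-T,T];L^2)$ is at stake) matches Section 2. However, there are three concrete problems with the sketch as written.

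First, the decomposition into resonant and non-resonant regions is organized differently from the paper, and your version has a gap. You propose to isolate the zero set of the algebraic factor $3\beta+5(\xi_1^2+\xi_1\xi_2+\xi_2^2)$ (the ellipse when $\beta<0$) and treat it by a transversality argument. The paper instead cuts according to the \emph{size} of the full modulation function, $C_0=\{|\Phi|>N\}$ at the first step and $C_{k-1}=\{|\tilde\mu_k|\le (2k+1)^3\max\{|\tilde\mu_1|,|\tilde\mu_{k-1}|\}^{1-\delta}\}$ thereafter; the near-resonant pieces are bounded with a gain of $M^{1/2}$ via sublevel-set measure estimates (Proposition \ref{prop-nm}) and left under the time integral. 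Your scheme would still have to handle the region where $\Phi$ is small because $\xi\xi_1\xi_2$ is small, which is not a transversal-curve phenomenon, and you give no mechanism for that.

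Second, your closing inequality $\|u_1-u_2\|\le C\,T'^{\delta}\|u_1-u_2\|$ cannot be correct for an infinite NFR scheme. Each integration by parts produces boundary terms $N_0^{(k)}(v(\tau))\big|_{\tau=0}^{\tau=t}$ that are \emph{not} accompanied by any power of $T$; in the paper their smallness comes entirely from the large modulation cutoff, via the factor $N^{-\frac{k-1}{2}+\frac{k-2}{2}\delta}$ in \eqref{n0k-3}. The contraction requires both $N$ large (for the boundary terms and the summability in $k$) and $T$ small (for the time-integrated near-resonant terms, which carry the loss $N^{1/2}$ at the first generation). A pure $T^{\delta}$ smallness argument does not close.

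Third, and most importantly, you never justify that an arbitrary solution $u\in C([-T,T];L^2)$ of \eqref{main} actually satisfies the normal form equation. This is the crux of \emph{unconditional} uniqueness: one must verify that the termwise differentiations and the exchange of $\partial_\tau$ with the frequency integrals are legitimate for a merely continuous-in-time $L^2$ solution, and that the remainder $\widehat{N_2^{(k)}(v)}\to 0$ pointwise as $k\to\infty$. The paper devotes Proposition \ref{prop-nmj} (the mixed $\mathcal{F}L^\infty$--$H^s$ modulation estimates) and Proposition \ref{prop-nk} precisely to this; the $H^s$ estimates alone do not suffice because the error term must be controlled pointwise in $\xi$. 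Without this step your argument only proves uniqueness for solutions of the normal form equation, not for solutions of the Kawahara equation itself.
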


The unconditional WP for nonlinear dispersive equations has been extensively investigated for the past 25 years, with the emergence, in the last decade or so, of an influential method to prove it, based on normal form reductions (NFR). More recently, critical steps in streamlining this method were taken by Guo-Kwon-Oh \cite{GKO13}, Kwon-Oh-Yoon \cite{KOY18}, and Kishimoto \cite{K19}. In fact, we refer the reader to \cite{KOY18} and \cite{K19} for comprehensive expositions of these topics, which also include exhaustive lists of references.

The approach we take in proving Theorem \ref{main-th} follows closely the framework implemented by Kwon-Oh-Yoon, in which unconditional WP is reduced to the proof of a specific set of multilinear estimates. Actually, our argument mirrors to a great extent the one for Theorem 1.2 in \cite{KOY18}, which applies to the modified KdV equation
\begin{equation}
\p_tu- \p_x^3u\pm\p_x(u^3)\,=\,0.
\label{kdv}
\end{equation}
This is why, in what follows, we focus mainly on:
\begin{itemize}
\item presenting stand-alone arguments for the multilinear estimates related to the Kawahara equation;

\item providing the reader precise directions to the NFR methodology in \cite{KOY18} on how to implement these bounds to conclude Theorem \ref{main-th}.
\end{itemize}
Nevertheless, a review of all the key elements in the infinite iteration scheme of NFR is provided for the convenience of the reader.

Our work can be seen as yet another case study for the wide applicability and robustness of this technique and we believe the arguments in this paper are more transparent than in other related articles on highlighting all the features in the NFR procedure. This is due to the enhanced dispersive relation and the less intricate nonlinearity of the Kawahara equation.

%%%%%%%%%%%%%%%%%%%%%%%%%%%%%%%%%%%%%%%%%%%%%%

\section{Fundamentals of the infinite iteration scheme of NFR}

%%%%%%%%%%%%%%%%%%%%%%%%%%%%%%%%%%%%%%%%%%%%%%

\subsection{Basic notational conventions and terminology}
First, we agree to write $A\lesssim B$ when $A\leq CB$ and $C>0$ is a constant varying from line to line and depending on various fixed parameters. Moreover, we write $A\sim B$ to denote that both $A\lesssim B$ and $B\lesssim A$ are valid. We also use $A\ll B$ to denote that $A\leq \epsilon B$ for some small absolute constant $\epsilon>0$.

Secondly, for a function $h=h(\xi,\eta,\zeta)$ defined on $\R^3$, we convene to write
\[
\int\limits_{\xi=\eta+\zeta} h = \int_\R h(\xi,\eta, \xi-\eta)\,d\eta= \int_\R h(\xi,\xi-\zeta, \zeta)\,d\zeta.
\]
Next, we denote by\footnote{For clarity purposes, we are going to use $\mathcal{F}$ only on mathematical expressions of considerable width.}
\begin{equation*}
\mathcal{F}(w)(\xi)=\widehat{w}(\xi)=\int_{\R} e^{-ix\xi}\,w(x)\,dx
\end{equation*}
the Fourier transform of the function $w=w(x)$ defined on $\R$ and, for $f=f(t,x)$ defined on $\R^2$, we adopt the notational convention
\[
\widehat{f}(t,\xi)=\widehat{f(t,\cdot)}(\xi).
\]
Following this, we introduce the norms
\begin{equation*}
\|w\|_{\mathcal{F}L^\infty}=\|\widehat w\|_{L^\infty(\R)}, \qquad \|w\|_{H^s}=\|\langle \xi\rangle^s\widehat{w}(\xi)\|_{L^2_\xi(\R)},
\end{equation*}
with $\langle a\rangle=(1+a^2)^{1/2}$. For a set $D\subseteq\R^2$, $\bc_D$ stands for its characteristic function.

Finally, in connection to the Kawahara equation, we let
\[
S(t)=e^{t(\p_x^5-\beta\p_x^3)}
\]
denote its linear propagator and, for a function $u=u(t,x)$, we call
\begin{equation}
v(t)= S(-t)u(t)\quad (\text{i.e.}, \ \widehat{v}(t,\xi)=e^{-it(\xi^5+\beta\xi^3)}\widehat{u}(t,\xi)), \label{ir}
\end{equation}
its \emph{interaction representation}.

%%%%%%%%%%%%%%%%%%%%%%%%%%%%%%%%%%%%%%%%%%%%%%

\subsection{Informal description of the iteration scheme}
The first step in the iteration scheme we want to implement is to write the Duhamel formulation for \eqref{main}, which is given by
\begin{equation*}
u(t)=S(t)u_0-\int_0^t S(t-\tau)(\p_x(u^2(\tau)))\,d\tau.
\end{equation*}
Using \eqref{ir}, we can easily rewrite this formulation as 
\begin{equation}
\widehat{v}(t,\xi)=\widehat{u_0}(\xi)-\int_0^t\int\limits_{\xi=\xi_1+\xi_2}i\xi \,e^{i\tau\Phi(\xi, \xi_1,\xi_2)}\,\widehat{v}(\tau,\xi_1)\,\widehat{v}(\tau,\xi_2)\,d\tau,
\label{v1}
\end{equation}
where
\begin{equation}
\aligned
\Phi(\xi, \xi_1,\xi_2)&=\, \xi_1^5+\xi_2^5-\xi^5+\beta(\xi_1^3+\xi_2^3-\xi^3)\\&=\, -\xi\, \xi_1\xi_2(5(\xi_1^2+\xi_1\xi_2+\xi_2^2)+3\beta)
\endaligned
\label{mod-f}
\end{equation}
and the last equality is justified by $\xi=\xi_1+\xi_2$. In what follows, $\Phi$ will be referred to as the \emph{modulation function}. 

Next, if we assume that $\Phi(\xi, \xi_1,\xi_2)\neq 0$ and we rely on 
\begin{equation*}
\p_{t}\left\{\frac{e^{it\Phi(\xi, \xi_1,\xi_2)}}{i\,\Phi(\xi, \xi_1,\xi_2)}\right\}= e^{it\Phi(\xi, \xi_1,\xi_2)}
\end{equation*}
and 
\begin{equation}
\p_{\tau}\widehat{v}(\tau,\xi)=-\int\limits_{\xi=\xi_1+\xi_2}i\xi\, e^{i\tau\Phi(\xi, \xi_1,\xi_2)}\,\widehat{v}(\tau,\xi_1)\,\widehat{v}(\tau,\xi_2),
\label{v1-2}
\end{equation}
then we can force an integration by parts with respect to $\tau$ in \eqref{v1} to derive
\begin{equation*}
\aligned
\widehat{v}(t,\xi)=\widehat{u_0}(\xi)-&\int_0^t\int\limits_{\xi=\xi_1+\xi_2}i\xi\, \p_{\tau}\left\{\frac{e^{i\tau\Phi(\xi, \xi_1,\xi_2)}}{i\,\Phi(\xi, \xi_1,\xi_2)}\right\}\,\widehat{v}(\tau,\xi_1)\,\widehat{v}(\tau,\xi_2)\,d\tau\\
=\widehat{u_0}(\xi)-&\int\limits_{\xi=\xi_1+\xi_2}\left\{i\xi \,\frac{e^{i\tau\Phi(\xi, \xi_1,\xi_2)}}{i\,\Phi(\xi, \xi_1,\xi_2)}\,\widehat{v}(\tau,\xi_1)\,\widehat{v}(\tau,\xi_2)\right\}\bigg |_{\tau=0}^{\tau=t}\\
+&\int_0^t\int\limits_{\xi=\xi_1+\xi_2}i\xi \,\frac{e^{i\tau\Phi(\xi, \xi_1,\xi_2)}}{i\,\Phi(\xi, \xi_1,\xi_2)}\,\p_{\tau}\left\{\widehat{v}(\tau,\xi_1)\,\widehat{v}(\tau,\xi_2)\right\}\,d\tau\\
=\widehat{u_0}(\xi)-&\int\limits_{\xi=\xi_1+\xi_2}\left\{i\xi \,\frac{e^{i\tau\Phi(\xi, \xi_1,\xi_2)}}{i\,\Phi(\xi, \xi_1,\xi_2)}\,\widehat{v}(\tau,\xi_1)\,\widehat{v}(\tau,\xi_2)\right\}\bigg |_{\tau=0}^{\tau=t}\\
+&2\int_0^t\int\limits_{\stackrel{\xi=\xi_1+\xi_2}{\xi_1=\eta_1+\eta_2}}\xi\xi_1\, \frac{e^{i\tau(\Phi(\xi, \xi_1,\xi_2)+\Phi(\xi_1, \eta_1,\eta_2))}}{i\,\Phi(\xi, \xi_1,\xi_2)}\,\widehat{v}(\tau,\eta_1)\,\widehat{v}(\tau,\eta_2)\,\widehat{v}(\tau,\xi_2)\,d\tau.
\endaligned
\end{equation*}

This line of reasoning can be continued with $$\Phi(\xi, \xi_1,\xi_2)\mapsto \Phi(\xi, \xi_1,\xi_2)+\Phi(\xi_1, \eta_1,\eta_2),$$ if one also knows that $\Phi(\xi, \xi_1,\xi_2)+\Phi(\xi_1, \eta_1,\eta_2)\neq 0$. Obviously, neither of the assumptions made so far can be guaranteed to hold true throughout the domains of integration. This is why one splits the original spatial integral in \eqref{v1} into two integrals corresponding to the regions 
\[
\{|\Phi(\xi, \xi_1,\xi_2)|=\,\text{large}\}\qquad \text{and} \qquad \{|\Phi(\xi, \xi_1,\xi_2)|=\,\text{small}\},
\] 
respectively, where the terms \emph{large} and \emph{small} are to be specified later. It is only for the former integral that we apply the integration by parts procedure, whereas the latter is dealt with as is. Similarly, for the resulting integral in the region $\{|\Phi(\xi, \xi_1,\xi_2)|=\,\text{large}\}$, we decompose into two integrals for which the domains are\footnote{It is worth noting that the meanings of \emph{large} and \emph{small} in the context of $\Phi(\xi, \xi_1,\xi_2)+\Phi(\xi_1, \eta_1,\eta_2)$ have to be readjusted from the ones used in connection to $\Phi(\xi, \xi_1,\xi_2)$.}
\[
\{|\Phi(\xi, \xi_1,\xi_2)|=\,\text{large},\ |\Phi(\xi, \xi_1,\xi_2)+\Phi(\xi_1, \eta_1,\eta_2)|=\,\text{large}\}
\]
and
\[\{|\Phi(\xi, \xi_1,\xi_2)|=\,\text{large}, \ |\Phi(\xi, \xi_1,\xi_2)+\Phi(\xi_1, \eta_1,\eta_2)|=\,\text{small}\}.
\] 
Yet again, an integration by parts with respect to $\tau$ is applied only for the former integral and the whole process continues along similar lines. 

The final goal of this iteration scheme is to reach the \emph{normal form equation} 
\begin{equation}
v(t)=u_0+\,\sum_{k=2}^\infty N_0^{(k)}(v(\tau))\,\bigg |_{\tau=0}^{\tau=t} \,+\, \int_0^t\,\sum_{k=1}^\infty N_1^{(k)}(v(\tau)) d\tau,
\label{nfe}
\end{equation}
where $N_0^{(k)}=N_0^{(k)}(w)$ and $N_1^{(k)}=N_1^{(k)}(w)$ are $k$-linear and $(k+1)$-linear expressions in $w$, respectively. For a fixed $k$, both $N_0^{(k)}$ and $N_1^{(k)}$ are the outcomes of performing $k$ iterations of the procedure outlined above. 

%%%%%%%%%%%%%%%%%%%%%%%%%%%%%%%%%%%%%%%%%%%%%%

\subsection{Formal derivation of the normal form equation}\label{sect-iter}

In order to formally implement this approach to our Cauchy problem, we recall the notions of \emph{ordered tree} and \emph{index function} from Section 3 in \cite{KOY18}, which are adapted to the Kawahara equation (whose nonlinearity is bilinear). 

\begin{definition}
A finite, partially ordered set $(\T, \leq)$ is called a \emph{binary tree} if:

\begin{itemize}

\item there exists a maximal element $r\in\T$, which is also named a \emph{root node}.

\item for each $a\in\T\backslash \{r\}$, there exists a unique $b\in\T$ such that $a\leq b$ and $(a\leq c\leq b \Rightarrow c=a$ or $c=b)$. In this instance, $a$ is called a \emph{child} of $b$ and $b$ is called a \emph{parent} of $a$.

\item each parent $b\in\T$ has exactly two children, labeled in the planar graphical representation of $\T$ (from left to right) as $b_1$ and $b_2$.

\end{itemize}
For obvious reasons, a parent in $\T$ is also named a \emph{non-terminal node} of $\T$. The set of all non-terminal nodes in $\T$ is denoted by $\T^0$, while $T^\infty=T\,\backslash\, T^0$ stands for set of all \emph{terminal nodes} in $\T$.  
\end{definition}

\begin{remark}
For a binary tree $\T$, it is straightforward to see that $|\T|=2k+1$, $|\T^0|=k$, and $|\T^\infty|=k+1$, where $k\geq 1$ is a specific integer. Moreover, one has $|\gT(k)|=k!$, where $\gT(k)$ denotes the set of all binary trees with $k$ non-terminal nodes.
\end{remark}

\begin{definition}
A sequence $(\T_k)_{1\leq k\leq K}$ of binary trees is called a \emph{chronicle of $K$ generations} if:

\begin{itemize}

\item $\T_k\in\gT(k) $ for all $1\leq k\leq K$.

\item for each $1\leq k\leq K-1$, $\T_{k+1}$ is obtained from $\T_k$ by changing one of its terminal nodes into a non-terminal one (with two children).

\end{itemize}
The binary tree $\T_K$ in a chronicle of $K$ generations is known as an \emph{ordered tree of the $K$th generation}.  
\end{definition}

\begin{definition}
For an ordered tree $\T\in\gT(k)$, we call $\bx: \T\to\R$ (identified with $(\bx_a)_{a\in\T}$) an \emph{index function} if
\begin{equation*}
\xi_b=\xi_{b_1}+\xi_{b_2}, \quad (\forall)\, b\in\T^0,
\end{equation*}
where $b_1$ and $b_2$ are the children of $b$. The set of all such index functions is denoted by $\Xi(\T)$. 
\end{definition}

To streamline how we write various terms appearing in the iteration scheme, we rely both on ordered trees and index functions, as well as superscripts in connection to \emph{generations of frequencies}. Thus, for an ordered tree $\T\in\gT(K)$ with its chronicle of generations $(\T_k)_{1\leq k\leq K}$ and associated index function $\bx\in\Xi(\T)$, we let
\begin{equation*}
(\xi^{(1)}, \xi_1^{(1)}, \xi_2^{(1)})=(\xi_r,\xi_{r_1},\xi_{r_2})
\end{equation*}
be the first generation of frequencies, where $r$ is the root node of $\T_K$ and $r_1$ and $r_2$ are its children. Similarly, for $k\geq 2$, we denote
\begin{equation*}
(\xi^{(k)}, \xi_1^{(k)}, \xi_2^{(k)})=(\xi_b,\xi_{b_1},\xi_{b_2})
\end{equation*}
to be the $k$th generation of frequencies, where $b$ is the terminal node of $\T_{k-1}$ changed into a non-terminal one for $\T_k$ and $b_1$ and $b_2$ are its children in $\T_k$. Accordingly, we work with the following notation for the modulation function introduced for the $k$th generation of frequencies,
\begin{equation*}
\mu_k=\Phi(\xi^{(k)}, \xi_1^{(k)}, \xi_2^{(k)})=  (\xi^{(k)}_1)^5+(\xi^{(k)}_2)^5-(\xi^{(k)})^5+\beta((\xi^{(k)}_1)^3+(\xi^{(k)}_2)^3-(\xi^{(k)})^3),
\end{equation*}
and we also write
\begin{equation*}
\tilde{\mu}_k=\sum_{j=1}^k \mu_j.
\end{equation*}

Now, we have all the prerequisites to formally describe the iteration scheme. We start with \eqref{v1} written as
\begin{equation*}
\widehat{v}(t,\xi)=\widehat{u_0}(\xi)+\int_0^t\,\widehat{N(v)}(\tau,\xi)\,d\tau,
\end{equation*}
with
\begin{equation*}
\widehat{N(v)}(\tau,\xi)=-\int\limits_{\xi=\xi_1+\xi_2}i\xi \,e^{i\tau\Phi(\xi, \xi_1,\xi_2)}\,\widehat{v}(\tau,\xi_1)\,\widehat{v}(\tau,\xi_2).
\end{equation*}
Next, we fix $N>1$ to be a large, dyadic parameter and write
\begin{equation*}
N(v)=N^{(1)}_1(v)+N^{(1)}_2(v),
\end{equation*}
where\footnote{$N^{(1)}_2$ is obviously defined in complementary fashion using $N$ and $N^{(1)}_1$.}
\begin{equation*}
\widehat{N^{(1)}_1(v)}(\tau,\xi)=-\int\limits_{\xi=\xi_1+\xi_2}\bc_{C^c_0}\,i\xi \,e^{i\tau\Phi(\xi, \xi_1,\xi_2)}\,\widehat{v}(\tau,\xi_1)\,\widehat{v}(\tau,\xi_2)
\end{equation*}
and $C_0=\{|\Phi(\xi,\xi_1,\xi_2)|>N\}$. Hence,
\begin{equation*}
\p_tv=N(v)=N^{(1)}_1(v)+N^{(1)}_2(v).
\end{equation*}
Moreover, in the language of ordered trees and index functions\footnote{Onward, for ease of notation, we write $\widehat{v}_{\xi_a}$ for $\widehat{v}(\tau, \xi_a)$ in the integral terms.}, we have $C_0=\{|\tm_1|=|\mu_1|>N\}$,
\begin{align}
\widehat{N^{(1)}_1(v)}(\tau,\xi)&=-\sum_{\T_1\in \gT(1)}\,\int\limits_{\stackrel{\bx\in\Xi(\T_1)}{\bx_r=\xi}} \bc_{C^c_0}\,i\xi^{(1)}\,e^{i\tau\tm_1}\prod_{a\in \T_1^\infty}\,\widehat{v}_{\xi_a},\label{n11}\\
\widehat{N^{(1)}_2(v)}(\tau,\xi)&=-\sum_{\T_1\in \gT(1)}\,\int\limits_{\stackrel{\bx\in\Xi(\T_1)}{\bx_r=\xi}} \bc_{C_0}\,i\xi^{(1)}\,e^{i\tau\tm_1}\prod_{a\in \T_1^\infty}\,\widehat{v}_{\xi_a}.
\end{align}

Since $\mu_1\neq 0$ on $C_0$, we infer based on \eqref{v1-2} that
\begin{equation}
\aligned
\widehat{N^{(1)}_2(v)}(\tau,\xi)=\ &\p_{\tau}\left\{-\sum_{\T_1\in \gT(1)}\,\int\limits_{\stackrel{\bx\in\Xi(\T_1)}{\bx_r=\xi}} \bc_{C_0}\,i\xi^{(1)}\,\frac{e^{i\tau\tm_1}}{i\tm_1}\prod_{a\in \T_1^\infty}\,\widehat{v}_{\xi_a}\right\}\\&\qquad+\sum_{\T_1\in \gT(1)}\,\int\limits_{\stackrel{\bx\in\Xi(\T_1)}{\bx_r=\xi}} \bc_{C_0}\,i\xi^{(1)}\,\frac{e^{i\tau\tm_1}}{i\tm_1}\p_{\tau}\left\{\prod_{a\in \T_1^\infty}\,\widehat{v}_{\xi_a}\right\}\\
=\ &\p_{\tau}\left\{-\sum_{\T_1\in \gT(1)}\,\int\limits_{\stackrel{\bx\in\Xi(\T_1)}{\bx_r=\xi}} \bc_{C_0}\,i\xi^{(1)}\,\frac{e^{i\tau\tm_1}}{i\tm_1}\prod_{a\in \T_1^\infty}\,\widehat{v}_{\xi_a}\right\}\\&\qquad-\sum_{\T_2\in \gT(2)}\,\int\limits_{\stackrel{\bx\in\Xi(\T_2)}{\bx_r=\xi}} \bc_{C_0}\,i\xi^{(1)}\,i\xi^{(2)}\,\frac{e^{i\tau\tm_2}}{i\tm_1}\prod_{a\in \T_2^\infty}\,\widehat{v}_{\xi_a}.
\endaligned
\label{n1-2}
\end{equation}
By introducing $N_0^{(2)}=N_0^{(2)}(v)$ and $N^{(2)}=N^{(2)}(v)$ according to
\begin{align}
\widehat{N^{(2)}_0(v)}(\tau,\xi)&=-\sum_{\T_1\in \gT(1)}\,\int\limits_{\stackrel{\bx\in\Xi(\T_1)}{\bx_r=\xi}} \bc_{C_0}\,i\xi^{(1)}\,\frac{e^{i\tau\tm_1}}{i\tm_1}\prod_{a\in \T_1^\infty}\,\widehat{v}_{\xi_a},\label{n02}\\\widehat{N^{(2)}(v)}(\tau,\xi)&=-\sum_{\T_2\in \gT(2)}\,\int\limits_{\stackrel{\bx\in\Xi(\T_2)}{\bx_r=\xi}} \bc_{C_0}\,i\xi^{(1)}\,i\xi^{(2)}\,\frac{e^{i\tau\tm_2}}{i\tm_1}\prod_{a\in \T_2^\infty}\,\widehat{v}_{\xi_a},
\end{align}
we have that
\begin{equation*}
N^{(1)}_2(v)= \p_tN^{(2)}_0(v)+N^{(2)}(v).
\end{equation*}
Following this, we let $C_1=\{|\tm_2|\leq 5^3|\tm_1|^{1-\delta}\}$, where $\delta>0$ is a small, fixed parameter, and write the decomposition 
\begin{equation*}
N^{(2)}(v)=N_1^{(2)}(v)+N_2^{(2)}(v)
\end{equation*}
with
 \begin{align}
\widehat{N^{(2)}_1(v)}(\tau,\xi)&=-\sum_{\T_2\in \gT(2)}\,\int\limits_{\stackrel{\bx\in\Xi(\T_2)}{\bx_r=\xi}} \bc_{C_0\cap C_1}\,i\xi^{(1)}\,i\xi^{(2)}\,\frac{e^{i\tau\tm_2}}{i\tm_1}\prod_{a\in \T_2^\infty}\,\widehat{v}_{\xi_a},\label{n12}\\
\widehat{N_2^{(2)}(v)}(\tau,\xi)&=-\sum_{\T_2\in \gT(2)}\,\int\limits_{\stackrel{\bx\in\Xi(\T_2)}{\bx_r=\xi}} \bc_{C_0\cap C_1^c}\,i\xi^{(1)}\,i\xi^{(2)}\,\frac{e^{i\tau\tm_2}}{i\tm_1}\prod_{a\in \T_2^\infty}\,\widehat{v}_{\xi_a}.
\end{align}
Thus, we deduce that
\begin{equation*}
\p_tv= \p_tN^{(2)}_0(v)+\left(N^{(1)}_1(v)+N^{(2)}_1(v)\right)+N^{(2)}_2(v).
\end{equation*}

Next, we notice that 
\begin{equation*}
|\tm_2|> 5^3|\tm_1|^{1-\delta}>5^3N^{1-\delta}
\end{equation*}
holds true on $C_0\cap C_1^c$ and, hence, we can argue like in the derivation of \eqref{n1-2} to obtain
\begin{equation*}
N^{(2)}_2(v)=\p_tN_0^{(3)}(v)+N^{(3)}(v)
\end{equation*}
with
\begin{equation*}
\aligned
\widehat{N^{(3)}_0(v)}(\tau,\xi)=\ &-\sum_{\T_2\in \gT(2)}\,\int\limits_{\stackrel{\bx\in\Xi(\T_2)}{\bx_r=\xi}} \bc_{C_0\cap C_1^c}\,i\xi^{(1)}\,i\xi^{(2)}\,\frac{e^{i\tau\tm_2}}{i\tm_1i\tm_2}\prod_{a\in \T_2^\infty}\,\widehat{v}_{\xi_a},\\
\widehat{N^{(3)}(v)}(\tau,\xi)=\ &-\sum_{\T_3\in \gT(3)}\,\int\limits_{\stackrel{\bx\in\Xi(\T_3)}{\bx_r=\xi}} \bc_{C_0\cap C_1^c}\,i\xi^{(1)}\,i\xi^{(2)}\,i\xi^{(3)}\,\frac{e^{i\tau\tm_3}}{i\tm_1i\tm_2}\prod_{a\in \T_3^\infty}\,\widehat{v}_{\xi_a}.
\endaligned
\end{equation*}
At this point, we introduce $C_2=\{|\tm_3|\leq 7^3\max \{|\tm_1|, |\tm_2|\}^{1-\delta}\}$ and break up $N^{(3)}$ into
\begin{equation*}
N^{(3)}(v)=N_1^{(3)}(v)+N_2^{(3)}(v),
\end{equation*}
with
\begin{equation*}
\aligned
\widehat{N^{(3)}_1(v)}(\tau,\xi)&=-\sum_{\T_3\in \gT(3)}\,\int\limits_{\stackrel{\bx\in\Xi(\T_3)}{\bx_r=\xi}} \bc_{C_0\cap C_1^c\cap C_2}\,i\xi^{(1)}\,i\xi^{(2)}\,i\xi^{(3)}\,\frac{e^{i\tau\tm_3}}{i\tm_1i\tm_2}\prod_{a\in \T_3^\infty}\,\widehat{v}_{\xi_a},\\
\widehat{N_2^{(3)}(v)}(\tau,\xi)&=-\sum_{\T_3\in \gT(3)}\,\int\limits_{\stackrel{\bx\in\Xi(\T_3)}{\bx_r=\xi}} \bc_{C_0\cap C_1^c\cap C_2^c}\,i\xi^{(1)}\,i\xi^{(2)}\,i\xi^{(3)}\,\frac{e^{i\tau\tm_3}}{i\tm_1i\tm_2}\prod_{a\in \T_3^\infty}\,\widehat{v}_{\xi_a}.
\endaligned
\end{equation*} 
Hence, after this third step, we arrive at
\begin{equation*}
\p_tv=\p_t\left\{N^{(2)}_0(v)+N^{(3)}_0(v)\right\}+\left(N^{(1)}_1(v)+N^{(2)}_1(v)+N^{(3)}_1(v)\right)+N^{(3)}_2(v).
\end{equation*}

Continuing in the same vein, the $k$th step ($k\geq 3$) brings about $C_{k-1}=\{|\tm_k|\leq (2k+1)^3\max \{|\tm_1|, |\tm_{k-1}|\}^{1-\delta}\}$,
\begin{equation}
\aligned
\widehat{N^{(k)}_0(v)}&(\tau,\xi)\\&=-\sum_{\T_{k-1}\in \gT(k-1)}\,\int\limits_{\stackrel{\bx\in\Xi(\T_{k-1})}{\bx_r=\xi}} \bc_{C_0\cap\, \bigcap_{j=1}^{k-2}C_j^c}\,\prod_{j=1}^{k-1}i\xi^{(j)}\,\frac{e^{i\tau\tm_{k-1}}}{\prod_{j=1}^{k-1}i\tm_j}\prod_{a\in \T_{k-1}^\infty}\,\widehat{v}_{\xi_a},\label{n0k}
\endaligned
\end{equation}
\begin{align}
\widehat{N^{(k)}(v)}(\tau,\xi)&=-\sum_{\T_k\in \gT(k)}\,\int\limits_{\stackrel{\bx\in\Xi(\T_k)}{\bx_r=\xi}} \bc_{C_0\cap\, \bigcap_{j=1}^{k-2}C_j^c}\,\prod_{j=1}^{k}i\xi^{(j)}\,\frac{e^{i\tau\tm_k}}{\prod_{j=1}^{k-1}i\tm_j}\prod_{a\in \T_k^\infty}\,\widehat{v}_{\xi_a},
\label{nk}
\end{align}
\begin{equation}
\aligned
\widehat{N^{(k)}_1(v)}&(\tau,\xi)\\&=-\sum_{\T_k\in \gT(k)}\,\int\limits_{\stackrel{\bx\in\Xi(\T_k)}{\bx_r=\xi}} \bc_{C_0\cap\, \bigcap_{j=1}^{k-2}C_j^c \cap C_{k-1}}\,\prod_{j=1}^{k}i\xi^{(j)}\,\frac{e^{i\tau\tm_k}}{\prod_{j=1}^{k-1}i\tm_j}\prod_{a\in \T_k^\infty}\,\widehat{v}_{\xi_a},\label{n1k}
\endaligned
\end{equation}
\begin{align}
\widehat{N_2^{(k)}(v)}(\tau,\xi)&=-\sum_{\T_k\in \gT(k)}\,\int\limits_{\stackrel{\bx\in\Xi(\T_k)}{\bx_r=\xi}} \bc_{C_0\cap\, \bigcap_{j=1}^{k-1}C_j^c}\,\prod_{j=1}^{k}i\xi^{(j)}\,\frac{e^{i\tau\tm_k}}{\prod_{j=1}^{k-1}i\tm_j}\prod_{a\in \T_k^\infty}\,\widehat{v}_{\xi_a},
\label{n2k}
\end{align}
for which
\begin{equation*}
N^{(k-1)}_2(v)=\p_tN_0^{(k)}(v)+N^{(k)}(v)=\p_tN_0^{(k)}(v)+N_1^{(k)}(v)+N_2^{(k)}(v)
\end{equation*}
and, consequently,
\begin{equation}
\p_tv=\p_t\left\{\sum_{j=2}^k N^{(j)}_0(v)\right\}+\sum_{j=1}^k N^{(j)}_1(v)+N^{(k)}_2(v).
\label{nfe-k}
\end{equation}
Thus, one formally obtains \eqref{nfe} by performing infinite iterations of this scheme. 

%%%%%%%%%%%%%%%%%%%%%%%%%%%%%%%%%%%%%%%%%%%%%%

\subsection{Summary of key items complementary to the iteration scheme in the NFR methodology}\label{just}
Following the formal derivation of the normal form equation, the analysis shifts now to the two main tasks left to validate in order to claim Theorem \ref{main-th}. The first one consists in proving that the Cauchy problem associated to \eqref{nfe} is LWP in $H^s$ for $s\geq 0$, which is enough for our purposes given Kato's conditional GWP result when $s\geq-38/21$. In this direction, we develop localized bilinear estimates in $H^s$ and highlight arguments similar to the ones in Sections 3.2-3.4 of \cite{KOY18} on how successive applications of these bounds yield the desired goal.

The second task, which is more involved, has to do with justifying certain steps in the iteration scheme which led us from the Kawahara equation to the normal form equation. On one hand, we need to show that $L^2$-solutions of the former satisfy each of the intermediate equations \eqref{nfe-k}. This means that we can rigorously apply Leibniz's rule of differentiation with respect to the $\tau$ variable and commute the differentiation with respect to $\tau$ with the integration in the spatial frequencies. On the other hand, we have to prove that \eqref{nfe} is a limiting value for \eqref{nfe-k} in the sense that $\widehat{N_2^{(k)}(v)}\to 0$ pointwise in $(\tau,\xi)$ as $k\to \infty$. For this purpose, we develop localized bilinear modulation estimates involving both $\mathcal{F}L^\infty$ and $H^s$ and refer the reader to arguments in Sections 4.1-4.2 of \cite{KOY18} which rely on these types of bounds to infer the mathematical facts described above. 

%%%%%%%%%%%%%%%%%%%%%%%%%%%%%%%%%%%%%%%%%%%%%%

\section{Localized modulation estimates in $H^s$}\label{mod-hs}
In this section, we prepare a number of bilinear estimates to be used in the analysis of the normal form equation. To this end, we introduce the bilinear operators $N^{\al}_{\leq M}=N^{\al}_{\leq M}(v_1,v_2)$ and $I^{\al}_{>M}=I^{\al}_{>M}(v_1,v_2)$ by\footnote{For similar definitions, see Section 2.2 of \cite{KOY18}.}
\begin{equation}
\widehat{N^{\al}_{\leq M}(v_1,v_2)}(t,\xi)= -\int\limits_{\stackrel{\xi=\xi_1+\xi_2}{|\Phi(\xi, \xi_1,\xi_2)-\al |\leq M}}i\xi\, e^{it\Phi(\xi, \xi_1,\xi_2)}\,\widehat{v_1}(\xi_1)\,\widehat{v_2}(\xi_2)
\label{nam}
\end{equation}
and
\begin{equation}
\widehat{I^{\al}_{> M}(v_1,v_2)}(t,\xi)= -\int\limits_{\stackrel{\xi=\xi_1+\xi_2}{|\Phi(\xi, \xi_1,\xi_2)-\al |>  M}}i\xi\, \frac{e^{it\Phi(\xi, \xi_1,\xi_2)}}{\Phi(\xi, \xi_1,\xi_2)-\al }\,\widehat{v_1}(\xi_1)\,\widehat{v_2}(\xi_2),
\label{iam}
\end{equation}
with related definitions for $N^{\al}_{M}=N^{\al}_{M}(v_1,v_2)$ and $I^{\al}_{M}=I^{\al}_{M}(v_1,v_2)$ in which the restriction in the domain of the integral is now $M<|\Phi(\xi, \xi_1,\xi_2)-\al |\leq 2M$. Above, $M\geq 1$ and $\al$ are real parameters and $\Phi$ is given by \eqref{mod-f}. 

\begin{prop}
If $s\geq 0$, then the following estimates hold true:
\begin{equation}
\|N^{\al}_{\leq  M}(v_1,v_2)(t)\|_{H^s}\lesssim M^{1/2}\|v_1\|_{H^s}\|v_2\|_{H^s},\label{nm}
\end{equation}
\begin{equation}
\aligned
\|N^{\al}_{\leq M}(v_1,v_1)(t)-N^{\al}_{\leq  M}(v_2,&v_2)(t) \|_{H^s}\\
&\lesssim\, M^{1/2}\|v_1-v_2\|_{H^s}(\|v_1\|_{H^s}+\|v_2\|_{H^s}),
\endaligned
\label{nmd}
\end{equation}
\begin{equation}
\|I^{\al}_{> M}(v_1,v_2)(t)\|_{H^s}\lesssim M^{-1/2}\|v_1\|_{H^s}\|v_2\|_{H^s},
\label{im}
\end{equation}
\begin{equation}
\aligned
\|I^{\al}_{> M}(v_1,v_1)(t)-I^{\al}_{> M}(v_2,&v_2)(t) \|_{H^s}\\
&\lesssim\, M^{-1/2}\|v_1-v_2\|_{H^s}(\|v_1\|_{H^s}+\|v_2\|_{H^s}).
\label{imd}
\endaligned
\end{equation}
$N^{\al}_{M}$ and $I^{\al}_{M}$ satisfy bounds identical to the ones for $N^{\al}_{\leq M}$ and $I^{\al}_{>M}$, respectively.
\label{prop-nm}
\end{prop}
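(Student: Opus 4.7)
I would prove Proposition~\ref{prop-nm} by adapting the template of the analogous bilinear estimates in Section~2 of \cite{KOY18} to the Kawahara modulation. The first reduction uses $\langle \xi \rangle^s \lesssim \langle \xi_1 \rangle^s + \langle \xi_2 \rangle^s$ for $s \geq 0$ and $\xi = \xi_1 + \xi_2$, which (after redistributing the weight) reduces the $H^s$ estimates by symmetry to the case $s = 0$. The second reduction decomposes $N^{\alpha}_{\leq M}$ along dyadic scales $M' \leq M$ and reduces \eqref{nm} to the dyadic bound $\|N^{\alpha}_{M'}(v_1, v_2)\|_{L^2} \lesssim (M')^{1/2} \|v_1\|_{L^2}\|v_2\|_{L^2}$ via geometric summation. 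The estimate \eqref{im} then follows from the same dyadic bound together with the pointwise inequality $|\widehat{I^{\alpha}_{M'}}(\xi)| \leq (M')^{-1} |\widehat{N^{\alpha}_{M'}}(\xi)|$ on each shell $|\Phi - \alpha| \sim M'$, summed over $M' \geq M$.

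The core dyadic $L^2$ bound is obtained via Cauchy--Schwarz in $\xi_1$, yielding
\[
|\widehat{N^{\alpha}_{M'}}(\xi)|^2 \lesssim |\xi|^2 |A_\xi^{M'}| \int_{A_\xi^{M'}} |\widehat{v_1}(\xi_1)|^2 |\widehat{v_2}(\xi - \xi_1)|^2 \, d\xi_1,
\]
with $A_\xi^{M'} = \{\xi_1 : M' < |\Phi(\xi, \xi_1, \xi - \xi_1) - \alpha| \leq 2M'\}$, and the key analytic input is a sharp sublevel set estimate for $|A_\xi^{M'}|$. Substituting $\xi_1 = \xi/2 + \delta$ gives the biquadratic representation
\[
\Phi = -\frac{15\xi^5}{16} - \frac{3\beta\xi^3}{4} + \left(\frac{5\xi^3}{2} + 3\beta\xi\right)\delta^2 + 5\xi\,\delta^4,
\]
whose $\delta^2$-coefficient has scale $\xi^3$ for large $|\xi|$; this reflects the fifth-order dispersion (cf.\ the $\xi$ scaling for KdV) and forces $|A_\xi^{M'}| \lesssim (M')^{1/2}/|\xi|^{3/2}$ in the near-diagonal regime. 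Away from the diagonal, the identity $|\partial_{\xi_1} \Phi| = |\xi (\xi_1 - \xi_2)(5(\xi_1^2 + \xi_2^2) + 3\beta)|$ together with a change of variables controls the sublevel set. Integrating in $\xi$ via Fubini and combining the two regimes --- exploiting, in the balanced high-frequency regime, that $|\Phi - \alpha| \sim M'$ restricts $\xi$ to a thin set of measure $\sim M'/|\xi|^4$ --- yields $\|N^{\alpha}_{M'}\|_{L^2}^2 \lesssim M' \|v_1\|_{L^2}^2 \|v_2\|_{L^2}^2$.

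The Lipschitz bounds \eqref{nmd} and \eqref{imd} are then immediate from the bilinear identity
\[
N^{\alpha}(v_1, v_1) - N^{\alpha}(v_2, v_2) = N^{\alpha}(v_1 - v_2, v_1) + N^{\alpha}(v_2, v_1 - v_2),
\]
combined with \eqref{nm} (and likewise for $I^{\alpha}$). The main technical obstacle is the sublevel set analysis in the balanced regime $|\xi_1| \sim |\xi_2| \sim |\xi|/2$, where $\partial_{\xi_1} \Phi$ vanishes on the diagonal and no uniform pointwise bound $|\xi|^2 |A_\xi^{M'}| \lesssim M'$ is available; the $M^{1/2}$ scaling is recovered only by simultaneously using the improved sublevel set bound $(M')^{1/2}/|\xi|^{3/2}$ and the thin-set constraint on the output frequency.
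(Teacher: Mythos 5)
Your reductions (to $s=0$, to a single dyadic modulation shell, the passage from the $N$-bounds to the $I$-bounds by dividing by the modulation and summing dyadically, and the bilinear identity giving the difference estimates) are all correct and consistent with the paper, as are your computations of $\partial_{\xi_1}\Phi$ at fixed $\xi$ and of the biquadratic form of $\Phi$ in $\delta=\xi_1-\xi/2$. The gap is in the balanced regime $1\ll|\xi|\lesssim|\xi_1|\sim|\xi_2|$, which you correctly identify as the crux but do not actually close. After your Cauchy--Schwarz in $\xi_1$ at fixed $\xi$ and Fubini, the quantity to control is $\iint |\xi|^2\,|A^{M'}_\xi|\,\bc_{\{|\Phi-\al|\sim M'\}}\,|\widehat{v_1}(\xi_1)|^2\,|\widehat{v_2}(\xi_2)|^2$, and since $|\widehat{v_1}|^2$ and $|\widehat{v_2}|^2$ are merely $L^1$ densities, the only way to conclude is a pointwise bound on the weight $|\xi|^2|A^{M'}_\xi|$. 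Your own sublevel-set estimate shows this weight can be as large as $(M')^{1/2}|\xi|^{1/2}$ near the diagonal, which is unbounded in $\xi$. The ``thin-set constraint on the output frequency'' (measure $\sim M'/|\xi|^4$ in $\xi$ for fixed $\delta$ or fixed $\xi_1$) cannot be injected at this stage: information about the $\xi$-slices of the support of the weight buys nothing against an $L^1\times L^1$ pairing, so ``simultaneously using'' the two bounds is a restatement of the difficulty rather than an argument.

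The paper's resolution is to change which variable is frozen in the Cauchy--Schwarz in this regime: one first pairs off $w_2$ in $\xi_2$, leaving an $L^2_{\xi_2}$ norm of a $\xi_1$-integral of $w_1(\xi_1)w(\xi)$, so that the surviving one-dimensional sublevel-set count is $\sup_{\xi_2}\int\bc_{\{|\Phi-\al|\leq M\}}|\xi|^2\,d\xi_1$ with $\xi_2$ \emph{fixed} and $\xi=\xi_1+\xi_2$ varying. The relevant derivative is then $H'(\xi_1)=-\xi_2(\xi+\xi_1)(5(\xi_1^2+\xi^2)+3\beta)$ as in \eqref{h'}, which after selecting the larger of $|\xi+\xi_1|,|\xi+\xi_2|\gtrsim|\xi|$ is bounded below by $|\xi_2|^3$ with no degeneracy at $\xi_1=\xi_2$; the weight becomes $|\xi|^2\cdot M/|\xi_2|^3\leq M/|\xi_2|\lesssim M$ and the estimate closes. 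You should replace your near-diagonal patch by this reordering (or supply an explicit mechanism converting the $\xi$-thinness into a gain against $L^1$ densities, which I do not see).
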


\begin{proof}
We start by showing that \eqref{nm} is valid and we use duality to argue that it is sufficient to prove that
\begin{equation}
\aligned
\int\limits_\R\int\limits_{\xi=\xi_1+\xi_2}\bc_{\{|\Phi(\xi, \xi_1,\xi_2)-\al |\leq M\}}\frac{|\xi|\langle \xi\rangle^s}{\langle \xi_1\rangle^s\langle \xi_2\rangle^s}\, &w_1(\xi_1)\,w_2(\xi_2)\,w(\xi)\,d\xi\\
&\lesssim M^{1/2}\|w_1\|_{L^2} \|w_2\|_{L^2} \|w\|_{L^2},
\endaligned
\label{nmw}
\end{equation}
where $w_1$, $w_2$, and $w\in L^2$ are all non-negative functions. Next, on the account of the triangle inequality and $s\geq 0$, we have that $\langle \xi\rangle^s\leq\langle \xi_1\rangle^s\langle \xi_2\rangle^s$ and, thus, it is enough to prove the claim for $s=0$. As in the corresponding result for \eqref{kdv} (i.e., Lemma 2.6 in \cite{KOY18}), we perform an analysis which takes into account the sizes of various spatial frequencies. We can assume $|\xi_1|\geq |\xi_2|$ since \eqref{nmw} exhibits symmetry in the indices $1$ and $2$. Accordingly, we split the analysis in the following complementary scenarios: 
\[|\xi|\lesssim 1, \qquad \max\{|\xi_2|,1\}\ll |\xi|,\qquad \text{and} \qquad 1\ll |\xi|\lesssim |\xi_1|\sim |\xi_2|.
\]

\underline{Case: $|\xi|\lesssim 1$.}\ Here, we use the Cauchy-Schwarz inequality twice to deduce
\begin{equation*}
\aligned
\text{LHS of \eqref{nmw}}\ &\lesssim \bigg\|\int\limits_{\xi=\xi_1+\xi_2}w_1(\xi_1)\,w_2(\xi_2)\bigg\|_{L^2_{\{|\xi|\lesssim 1\}}}\|w\|_{L^2}\\  
&\lesssim \bigg\|\int\limits_{\xi=\xi_1+\xi_2}w_1(\xi_1)\,w_2(\xi_2)\bigg\|_{L^\infty_{\{|\xi|\lesssim 1\}}}\|w\|_{L^2}\lesssim \|w_1\|_{L^2} \|w_2\|_{L^2} \|w\|_{L^2},
\endaligned
\end{equation*}
which proves the claim.

\underline{Case: $\max\{|\xi_2|,1\}\ll |\xi|$.}\ In this instance, another double application of the Cauchy-Schwarz inequality yields
\begin{equation*}
\aligned
\text{LHS of \eqref{nmw}}\ &\lesssim \bigg\|\, |\xi|\int\limits_{\xi=\xi_1+\xi_2}\bc_{\{|\Phi(\xi, \xi_1,\xi_2)-\al |\leq M\}}w_1(\xi_1)\,w_2(\xi_2)\bigg\|_{L^2}\|w\|_{L^2}\\  
&\lesssim \sup_{\xi\in \R}\bigg(\xi^2\int\limits_{\xi=\xi_1+\xi_2}\bc_{\{|\Phi(\xi, \xi_1,\xi_2)-\al |\leq M\}}\bigg)^{1/2}\|w_1\|_{L^2} \|w_2\|_{L^2} \|w\|_{L^2}.
\endaligned
\end{equation*}
Hence, it suffices to show that
\begin{equation*}
\sup_{|\xi|\gg 1}\xi^2\int\limits_{\stackrel{\xi=\xi_1+\xi_2}{\max\{|\xi_2|,1\}\ll |\xi|}}\bc_{\{|\Phi(\xi, \xi_1,\xi_2)-\al |\leq M\}}\lesssim M.
\end{equation*}
For this purpose, we fix $\xi$, write $G(\xi_1)=\Phi(\xi_1,\xi-\xi_1, \xi)$, and compute
\begin{equation}
G'(\xi_1)=(\xi_1^2-(\xi-\xi_1)^2)(5(\xi_1^2+(\xi-\xi_1)^2)+3\beta).
\label{g'}
\end{equation}
Since $|\xi|=|\xi_1+\xi_2|\gg |\xi_2|$, it follows that $|\xi_1|\sim|\xi|\gg|\xi-\xi_1|$ and, consequently, $G'(\xi_1)\sim \xi^4$. On the other hand, on the domain of integration, we have $|G(\xi_1)-\al|\leq M$, which implies that the range of $G$ has size $\lesssim M$. In the context of the mean value theorem, the last two facts tell us that the domain in which $\xi_1$ varies has size $\lesssim M/\xi^4$ and we conclude that  
\begin{equation*}
\sup_{|\xi|\gg 1} \xi^2\int\limits_{\stackrel{\xi=\xi_1+\xi_2}{\max\{|\xi_2|,1\}\ll |\xi|}}\bc_{\{|\Phi(\xi, \xi_1,\xi_2)-\al |\leq M\}}\lesssim \sup_{|\xi|\gg 1} \frac{M}{\xi^2}\lesssim M.
\end{equation*}

\underline{Case: $1\ll |\xi|\lesssim |\xi_1|\sim |\xi_2|$.}\ For this scenario, we first argue that the triangle inequality leads to 
\begin{equation*}
\max\{|\xi+\xi_1|, |\xi+\xi_2|\}\sim |\xi+\xi_1|+ |\xi+\xi_2|\geq 3|\xi|
\end{equation*}
and we choose to work onward with the assumption $\max\{|\xi+\xi_1|, |\xi+\xi_2|\}=|\xi+\xi_1|$. Yet again, we rely twice on the Cauchy-Schwarz inequality to infer that 
\begin{equation*}
\aligned
&\text{LHS of \eqref{nmw}}\ \lesssim \left\|\ \int\limits_{\stackrel{|\xi_1|\sim|\xi_2|\gtrsim |\xi|\gg 1}{|\xi+\xi_1|\geq |\xi+\xi_2|}}\bc_{\{|\Phi(\xi, \xi_1,\xi_2)-\al |\leq M\}}|\xi|\,w_1(\xi_1)\,w(\xi)\,d\xi_1\right\|_{L^2_{\xi_2}}\|w_2\|_{L^2}\\  
&\qquad\lesssim \sup_{|\xi_2|\gg 1}\left(\ \int\limits_{\stackrel{|\xi_1|\sim|\xi_2|\gtrsim |\xi|\gg 1}{|\xi+\xi_1|\geq |\xi+\xi_2|}}\bc_{\{|\Phi(\xi, \xi_1,\xi_2)-\al |\leq M\}}\,|\xi|^2\,d\xi_1\right)^{1/2}\|w_1\|_{L^2} \|w_2\|_{L^2} \|w\|_{L^2}.
\endaligned
\end{equation*}
Thus, what we need to prove is
\begin{equation*}
\sup_{|\xi_2|\gg 1} \int\limits_{\stackrel{|\xi_1|\sim|\xi_2|\gtrsim |\xi|\gg 1}{|\xi+\xi_1|\geq |\xi+\xi_2|}}\bc_{\{|\Phi(\xi, \xi_1,\xi_2)-\al |\leq M\}}\,|\xi|^2\,d\xi_1\lesssim M.
\end{equation*}
To this end, we proceed in similar fashion to the second case. We fix $\xi_2$, write $H(\xi_1)=\Phi(\xi_1,\xi_2, \xi_1+\xi_2)$, and compute
\begin{equation}
H'(\xi_1)=-\xi_2(\xi+\xi_1)(5(\xi_1^2+\xi^2)+3\beta).
\label{h'}
\end{equation}
Since $1\ll |\xi|\lesssim |\xi_1|\sim |\xi_2|$ and $|\xi+\xi_1| \gtrsim |\xi|$, it follows that $|H'(\xi_1)|\gtrsim|\xi_2|^3$ and, arguing as before, we derive that the set in which $\xi_1$ takes values has size $\lesssim M/|\xi_2|^3$. Hence, we obtain
\begin{equation*}
\sup_{|\xi_2|\gg 1} \int\limits_{\stackrel{|\xi_1|\sim|\xi_2|\gtrsim |\xi|\gg 1}{|\xi+\xi_1|\geq |\xi+\xi_2|}}\bc_{\{|\Phi(\xi, \xi_1,\xi_2)-\al |\leq M\}}\,|\xi|^2\,d\xi_1\lesssim \sup_{|\xi_2|\gg 1} \frac{M}{|\xi_2|}\lesssim M
\end{equation*}
and the proof of \eqref{nm} is complete.

Next, we observe that \eqref{nm} implies \eqref{nmd}, since a symmetric bilinear operator $T$ satisfies
\begin{equation*}
T(v_1,v_1)-T(v_2,v_2)= T(v_1,v_1-v_2)+T(v_2,v_1-v_2).
\end{equation*}
Moreover, the estimates for $N^{\al}_{M}$ follow at once from \eqref{nm} and \eqref{nmd} since
\begin{equation*}
N^{\al}_{M}=N^{\al}_{\leq 2M}-N^{\al}_{\leq M}.
\end{equation*}

For $I^{\al}_{M}$, we notice first that \eqref{nmw} is equivalent to
\begin{equation*}
\aligned
\bigg\|\ \int\limits_{\xi=\xi_1+\xi_2}\bc_{\{|\Phi(\xi, \xi_1,\xi_2)-\al |\leq M\}}|\xi|\,\langle\xi\rangle^s |\widehat{v_1}(\xi_1)|\,|\widehat{v_2}(\xi_2)|\bigg\|_{L^2}\lesssim M^{1/2} \|v_1\|_{H^s} \|v_2\|_{H^s}.
\endaligned
\end{equation*}
Therefore, we infer directly from the definition of $I^{\al}_{M}$ that 
\begin{equation*}
\aligned
\|I^{\al}_{M}(v_1,v_2)(t)\|_{H^s}&\lesssim \frac{1}{M}\bigg\|\ \int\limits_{\xi=\xi_1+\xi_2}\bc_{\{|\Phi(\xi, \xi_1,\xi_2)-\al |\sim M\}}|\xi|\,\langle\xi\rangle^s |\widehat{v_1}(\xi_1)|\,|\widehat{v_2}(\xi_2)|\bigg\|_{L^2}\\&\lesssim M^{-1/2} \|v_1\|_{H^s} \|v_2\|_{H^s}. 
\endaligned
\end{equation*}
This estimate implies \eqref{im} since
\begin{equation*}
\aligned
\|I^{\al}_{>M}(v_1,v_2)(t)\|_{H^s}&\lesssim \sum_{\stackrel{N\geq M}{N\,\text{dyadic}}}\|I^{\al}_{N}(v_1,v_2)(t)\|_{H^s}\\
&\lesssim \sum_{\stackrel{N\geq M}{N\,\text{dyadic}}}N^{-1/2}\|v_1\|_{H^s} \|v_2\|_{H^s}\lesssim M^{-1/2} \|v_1\|_{H^s} \|v_2\|_{H^s}. 
\endaligned
\end{equation*}
The arguments for \eqref{imd} and the similar bound for $I^{\al}_{M}$ are identical to the one for \eqref{nmd}.
\end{proof}

%%%%%%%%%%%%%%%%%%%%%%%%%%%%%%%%%%%%%%%%%%%%%%

\section{LWP for the normal form equation \eqref{nfe}}

Following the development of modulation estimates in Sobolev spaces, we use Proposition \ref{prop-nm} to show that the normal form equation is LWP in $H^s$, with $s\geq 0$, with the solution being unique in the natural solution space \eqref{xt}. For this purpose, we first prove favorable bounds for $N^{(k)}_0(v)$ ($k\geq 2$) and $N^{(k)}_1(v)$ ($k\geq 1$), which were introduced in \eqref{n02}, \eqref{n0k}, \eqref{n11}, \eqref{n12}, and \eqref{n1k}. The key observation is that both $N^{(k)}_0$ and $N^{(k)}_1$ are linear combinations of multilinear operators, which are, in turn, compositions of specific bilinear operators of the type defined by \eqref{nam} and \eqref{iam}. For example,
\begin{align}
N^{(2)}_0(v)(t,x)=-i\,I^{0}_{> N}(v(t),v(t))(t,x),\label{n02-2}
\end{align}
\begin{equation}
\aligned
N^{(3)}_0(v)(t,x)=&\,I^{0}_{> N}(I^{-\tm_1}_{> 5^3|\tm_1|^{1-\delta}}(v(t),v(t))(t), v(t))(t,x)\\
&\qquad+I^{0}_{> N}(v(t), I^{-\tm_1}_{> 5^3|\tm_1|^{1-\delta}}(v(t),v(t))(t))(t,x),
\endaligned
\end{equation}
\begin{align}
N^{(1)}_1(v)(t,x)=\,N^{0}_{\leq N}(v(t),v(t))(t,x),\label{n11-v2}
\end{align}
\begin{equation}
\aligned
N^{(2)}_1(v)(t,x)=&\,i\bigg(I^{0}_{> N}(N^{-\tm_1}_{\leq 5^3|\tm_1|^{1-\delta}}(v(t),v(t))(t), v(t))(t,x)\\
&\qquad+I^{0}_{> N}(v(t), N^{-\tm_1}_{\leq 5^3|\tm_1|^{1-\delta}}(v(t),v(t))(t))(t,x)\bigg).
\endaligned
\label{n21-v2}
\end{equation}

In fact, the generic term featured in the formulas \eqref{n0k} and \eqref{n1k} for $N^{(k)}_0$ and $N^{(k)}_1$ originates from the compositions (in reverse order) of 
\begin{equation*}
I^{0}_{> N},\quad I^{-\tm_1}_{> 5^3|\tm_1|^{1-\delta}}, \quad I^{-\tm_2}_{> 7^3\max\{|\tm_1|, |\tm_2|\}^{1-\delta}},\ \ldots, \ I^{-\tm_{k-2}}_{> (2k-1)^3\max\{|\tm_1|, |\tm_{k-2}|\}^{1-\delta}},
\end{equation*}
and
\begin{equation*}\aligned
I^{0}_{> N},\quad I^{-\tm_1}_{> 5^3|\tm_1|^{1-\delta}}, \quad &I^{-\tm_2}_{> 7^3\max\{|\tm_1|, |\tm_2|\}^{1-\delta}},\ \ldots, \ I^{-\tm_{k-2}}_{> (2k-1)^3\max\{|\tm_1|, |\tm_{k-2}|\}^{1-\delta}},\\ 
&N^{-\tm_{k-1}}_{\leq (2k+1)^3\max\{|\tm_1|, |\tm_{k-1}|\}^{1-\delta}},
\endaligned
\end{equation*}
respectively. For a more formal treatment of the structures of $N^{(k)}_0$ and $N^{(k)}_1$ in the case of \eqref{kdv}, we refer the reader to Definition 3.13 and the comments following this definition in \cite{KOY18}. This can be easily adapted to our setting since the nonlinearity is quadratic instead of cubic. 

\begin{prop}
If $s\geq 0$, $N>1$, and $0<\delta<1$, then the following estimates hold true:
\begin{equation}
\|N^{(k)}_0(v)(t)\|_{H^s}\lesssim N^{-\frac{k-1}{2}+\frac{k-2}{2} \delta}\|v(t)\|^k_{H^s},
\label{n0k-2}
\end{equation}
\begin{equation}
\aligned
\|N^{(k)}_0(v)(t)-N^{(k)}_0(w)(t)\|_{H^s}\lesssim\ &N^{-\frac{k-1}{2}+\frac{k-2}{2} \delta}\left(\|v(t)\|^{k-1}_{H^s}+\|w(t)\|^{k-1}_{H^s}\right)\\
&\cdot\|v(t)-w(t)\|_{H^s},
\endaligned
\label{n0k-3}
\end{equation}
\begin{equation}
\|N^{(k)}_1(v)(t)\|_{H^s}\lesssim N^{-\frac{k-2}{2}+\frac{k-3}{2} \delta}\|v(t)\|^{k+1}_{H^s},
\label{n1k-2}
\end{equation}
\begin{equation}
\aligned
\|N^{(k)}_1(v)(t)-N^{(k)}_1(w)(t)\|_{H^s}\lesssim\ &N^{-\frac{k-2}{2}+\frac{k-3}{2} \delta}\left(\|v(t)\|^{k}_{H^s}+\|w(t)\|^{k}_{H^s}\right)\\
&\cdot\|v(t)-w(t)\|_{H^s},
\endaligned
\label{n1k-3}
\end{equation}
for all $k\geq 2$. In addition,
\begin{equation}
\|N^{(1)}_1(v)(t)\|_{H^s}\lesssim N^{1/2}\|v(t)\|^2_{H^s},
\label{n11-2}
\end{equation}
\begin{equation}
\aligned
\|N^{(1)}_1(v)(t)-N^{(1)}_1(w)(t)\|_{H^s}\lesssim\ &N^{1/2}\left(\|v(t)\|_{H^s}+\|w(t)\|_{H^s}\right)\\
&\cdot\|v(t)-w(t)\|_{H^s},
\endaligned
\label{n11-3}
\end{equation}
are valid.
\label{prop-nk-hs}
\end{prop}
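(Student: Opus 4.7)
The plan is to deduce all bounds in Proposition \ref{prop-nk-hs} from iterated applications of Proposition \ref{prop-nm}, exploiting two features: the estimates there are uniform in the shift parameter $\alpha$, and each $N^{(k)}_0$, $N^{(k)}_1$ admits, as described in the paragraph following \eqref{n21-v2}, an explicit representation as a finite sum of nested compositions of bilinear operators of type $I^{-\tm_{j}}_{>\,\cdot\,}$, together with, in the case of $N^{(k)}_1$, an innermost $N^{-\tm_{k-1}}_{\leq\,\cdot\,}$.

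First I would dispatch the base cases: \eqref{n11-2} and \eqref{n11-3} follow directly from \eqref{n11-v2} and \eqref{nm}, \eqref{nmd} with $M=N$; and \eqref{n0k-2}, \eqref{n0k-3} at $k=2$ come from \eqref{n02-2} and \eqref{im}, \eqref{imd}, noting that the claimed exponent reduces to $-1/2$ when $k=2$.

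For $k\geq 3$, I would dyadically decompose each modulation variable as $|\tm_j|\sim L_j$. On the integration domain $C_0\cap\bigcap_{j=1}^{k-2}C_j^c$ one has $L_1\geq N$ and, because $\max\{|\tm_1|,|\tm_{j-1}|\}\geq|\tm_1|$, $L_j\gtrsim L_1^{1-\delta}\gtrsim N^{1-\delta}$ for $2\leq j\leq k-1$. On a fixed dyadic slice, replacing each factor $1/|\tm_j|$ by $L_j^{-1}$ in \eqref{n0k}, \eqref{n1k} and running the iterated Cauchy--Schwarz argument from the proof of Proposition \ref{prop-nm} one level at a time should produce
\begin{equation*}
\bigl\|(\text{dyadic piece at }(L_1,\ldots,L_{k-1}))(v)\bigr\|_{H^s}\;\lesssim\;\prod_{j=1}^{k-1}L_j^{-1/2}\,\|v\|_{H^s}^{k}
\end{equation*}
for $N^{(k)}_0$, with an additional factor $\max\{L_1,L_{k-1}\}^{(1-\delta)/2}$ for $N^{(k)}_1$ coming from the innermost $N$-operator. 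The crucial observation justifying this iteration is that the measure estimate underlying Proposition \ref{prop-nm}, namely a pointwise-in-outer-frequency bound on $\int\mathbf{1}_{|\Phi-\alpha|\leq L}\,d\xi_1$, remains valid when $\alpha=-\tm_{j-1}$ depends on outer integration variables. Summing the geometric series
\begin{equation*}
\sum_{L_1\geq N}L_1^{-1/2}\prod_{j=2}^{k-1}\sum_{L_j\gtrsim L_1^{1-\delta}}L_j^{-1/2}\;\lesssim\;N^{-1/2}\cdot N^{-(k-2)(1-\delta)/2}\;=\;N^{-\frac{k-1}{2}+\frac{k-2}{2}\delta}
\end{equation*}
yields \eqref{n0k-2}, and treating the extra factor in the $N^{(k)}_1$ case via $\max\{L_1,L_{k-1}\}^{(1-\delta)/2}\leq L_1^{(1-\delta)/2}+L_{k-1}^{(1-\delta)/2}$ and splitting the sum accordingly gives \eqref{n1k-2}. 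The combinatorial factors $|\gT(k-1)|$ and $|\gT(k)|$ from the tree sums are finite and are absorbed into $k$-dependent implicit constants.

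The difference estimates \eqref{n0k-3} and \eqref{n1k-3} will follow from the standard multilinear telescoping $T(v,\ldots,v)-T(w,\ldots,w)=\sum_{i} T(v,\ldots,v,v-w,w,\ldots,w)$ (with $v-w$ in slot $i$) combined with the operator-norm bounds just derived. I expect the chief technical hurdle to be the careful dyadic bookkeeping: verifying that the measure estimates from Proposition \ref{prop-nm} remain effective when $\alpha=-\tm_j$ varies with outer integration variables, and that the geometric sums converge cleanly in the presence of the $\max\{L_1,L_{k-1}\}$ term for $N^{(k)}_1$. Beyond this, no new ideas past Proposition \ref{prop-nm} and standard dyadic summation should be needed.
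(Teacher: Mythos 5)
Your overall strategy coincides with the paper's: dispatch the base cases from \eqref{n02-2}, \eqref{n11-v2} and Proposition \ref{prop-nm}; write the generic term of $N^{(k)}_0$ and $N^{(k)}_1$ as a nested composition of operators $I^{-\tm_l}_{>\cdot}$ with, for $N^{(k)}_1$, an innermost $N^{-\tm_{k-1}}_{\leq\cdot}$; decompose dyadically in the modulations; apply Proposition \ref{prop-nm} level by level using its uniformity in $\al$; sum the geometric series, splitting on whether $\max\{|\tm_1|,|\tm_{k-1}|\}$ is realized by $|\tm_1|$ or $|\tm_{k-1}|$; and obtain the difference bounds by telescoping. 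The exponent bookkeeping in your dyadic sums is correct and reproduces \eqref{n0k-2} and \eqref{n1k-2}.

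The genuine gap is your final sentence: you cannot absorb $|\gT(k)|=k!$ into a $k$-dependent implicit constant. The estimates of Proposition \ref{prop-nk-hs} are only of use if the implicit constants are uniform in $k$, because they are summed over $k$ in the contraction argument for Theorem \ref{lwp-nfe}, where a series of the form $\sum_{k\geq 2}C_k\,N^{-\frac{k-1}{2}+\frac{k-2}{2}\delta}(2r)^k$ must converge for a \emph{fixed} $N$; with $C_k\sim k!$ it diverges for every $N$. This is exactly why the sets $C_l$ are defined with the growing constants $(2l+3)^3$: each dyadic sum $\sum_{N_{l}\geq(2l+1)^3M_{l-1}^{1-\delta}}N_{l}^{-1/2}$ contributes a factor $(2l+1)^{-3/2}$, and the resulting product $\prod_{2\leq l\leq k-1}(2l+1)^{-3/2}\sim\big((2k-1)!!\big)^{-3/2}$ decays fast enough that $2^{ks}\,|\gT(k)|\prod_{2\leq l\leq k-1}(2l+1)^{-3/2}\lesssim 1$ uniformly in $k$. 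You must therefore carry these constants explicitly through the summation, as the paper does (following Lemma 3.15 of \cite{KOY18}), rather than discard them; otherwise the statement you prove is strictly weaker than what the application requires. Apart from this point, your outline matches the paper's proof.
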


\begin{proof}
First, we notice that \eqref{n0k-3}, \eqref{n1k-3}, and \eqref{n11-3} follows from \eqref{n0k-2}, \eqref{n1k-2}, and \eqref{n11-2}, respectively, in the same way we derived \eqref{nmd} from \eqref{nm}. Moreover, the bounds \eqref{n0k-2} for $k=2$ and \eqref{n11-2} are the direct consequences of \eqref{n02-2} and \eqref{im} and \eqref{n11-v2} and \eqref{nm}, respectively.  

Hence, in what follows, we focus on proving \eqref{n0k-2} for $k\geq 3$ and \eqref{n1k-2} for $k\geq 2$. Due to the generic nature of the terms in the expressions for $N^{(k)}_0$ and $N^{(k)}_1$, which was previously discussed, we analyze only one of them. We choose to work with
\begin{equation*}
T(v)=I^{0}_{> N}\big(v, I^{-\tm_1}_{> 5^3|\tm_1|^{1-\delta}}\big(v, \ldots I^{-\tm_{k-2}}_{> (2k-1)^3\max\{|\tm_1|, |\tm_{k-2}|\}^{1-\delta}}\big(v,v\underbrace{\big)\big)\ldots\big)}_{k-1 \, \text{times}},
\end{equation*}
and\footnote{For $N^{(2)}_1$, the formula of $S(v)$ needs a slight adjustment (see \eqref{n21-v2}).}
\begin{equation*}
\aligned
S(v)=I^{0}_{> N}\big(v, I^{-\tm_1}_{> 5^3|\tm_1|^{1-\delta}}\big(v, \ldots &I^{-\tm_{k-2}}_{> (2k-1)^3\max\{|\tm_1|, |\tm_{k-2}|\}^{1-\delta}}\big(v,\\ 
&N^{-\tm_{k-1}}_{\leq (2k+1)^3\max\{|\tm_1|, |\tm_{k-1}|\}^{1-\delta}}\big(v,v\underbrace{\big)\big)\ldots\big)}_{k \, \text{times}},
\endaligned
\end{equation*}
where we dropped the dependence of $v$ in terms of $t$ to simplify the notation. We perform a dyadic analysis in terms of the values taken by $\tm_1$, $\ldots,\, \tm_{k-1}$ and we rely on the notation 
\begin{equation*}
N_l\in 2^{\mathbb{Z}}, \quad |\tm_l|\sim N_l, \quad M_l=\max\{N_1, N_l\}, \qquad  (\forall)\,1\leq l\leq k,
\end{equation*}
which imposes the constraints  
\begin{equation*}
N_1\geq N,\ N_2\geq 5^3M_1^{1-\delta}, \ldots,\ N_{k-1}\geq (2k-1)^3M_{k-2}^{1-\delta}
\end{equation*}
for $T$ and
\begin{equation*}
N_1\geq N,\ N_2\geq 5^3M_1^{1-\delta}, \ldots,\ N_{k-1}\geq (2k-1)^3M_{k-2}^{1-\delta},\ N_{k}\leq (2k+1)^3M_{k-1}^{1-\delta}
\end{equation*}
for $S$.

By consecutively applying the results of Proposition \ref{prop-nm} for the operators
\[
I^0_{N_1},\ I^{-\tm_1}_{N_2}, \ldots,\  I^{-\tm_{k-2}}_{N_{k-1}},
\] 
and taking advantage of $M_l\geq N_1$ for all $l$, we derive
\begin{equation}
\aligned
\|T(v)\|_{H^s}&\lesssim \left(\sum_{N_1\geq N}\sum_{N_2\geq 5^3M_1^{1-\delta}}\ldots\sum_{N_{k-1}\geq (2k-1)^3M_{k-2}^{1-\delta}}\ \prod_{1\leq l\leq k-1}N_l^{-1/2}\right) \|v\|^k_{H^s}\\
&\lesssim \prod_{2\leq l\leq k-1}(2l+1)^{-3/2}\sum_{N_1\geq N}N_1^{-\frac{1}{2}-(k-2)\frac{1-\delta}{2}}\ \|v\|^k_{H^s}\\
&\lesssim \prod_{2\leq l\leq k-1}(2l+1)^{-3/2}\ N^{-\frac{k-1}{2}+\frac{k-2}{2}\delta}\ \|v\|^k_{H^s}.
\endaligned
\label{Tv}
\end{equation}
As explained in Lemma 3.15 of \cite{KOY18}, this is an estimate which leads to \eqref{n0k-2} since, for a fixed value of $s$,
\[
2^{ks}|\gT(k)|\prod_{2\leq l\leq k-1}(2l+1)^{-3/2}\lesssim 1
\]
uniformly in $k$.

Next, we turn to proving a favorable bound for $S(v)$, for which we use in succession Proposition \ref{prop-nm} for the operators
\[
I^0_{N_1},\ I^{-\tm_1}_{N_2}, \ldots,\  I^{-\tm_{k-2}}_{N_{k-1}},\ N^{-\tm_{k-1}}_{N_{k}}.
\] 
When compared to the analysis for $T(v)$, we need to consider here separately the cases when
\[
M_{k-1}=N_{k-1}\qquad \text{or}\qquad M_{k-1}=N_{1}.
\]
Thus, we are able to infer that
\begin{equation*}
\aligned
&\|S(v)\|_{H^s}\\
&\ \lesssim \left(\sum_{N_1\geq N}\ldots\sum_{N_{k-1}\geq (2k-1)^3M_{k-2}^{1-\delta}}\sum_{N_{k}\leq (2k+1)^3M_{k-1}^{1-\delta}}\prod_{1\leq l\leq k-1}N_l^{-1/2}\cdot N_k^{1/2}\right) \|v\|^{k+1}_{H^s}\\
&\ \lesssim \left(\sum_{N_1\geq N}\ldots\sum_{N_{k-1}\geq (2k-1)^3M_{k-2}^{1-\delta}}\sum_{N_{k}\leq (2k+1)^3N_{k-1}^{1-\delta}}\prod_{1\leq l\leq k-1}N_l^{-1/2}\cdot N_k^{1/2}\right) \|v\|^{k+1}_{H^s}\\
&\ \ \ + \left(\sum_{N_1\geq N}\ldots\sum_{N_{k-1}\geq (2k-1)^3M_{k-2}^{1-\delta}}\sum_{N_{k}\leq (2k+1)^3N_{1}^{1-\delta}}\prod_{1\leq l\leq k-1}N_l^{-1/2}\cdot N_k^{1/2}\right) \|v\|^{k+1}_{H^s}
\endaligned
\end{equation*}
\begin{equation*}
\aligned
&\ \lesssim \prod_{2\leq l\leq k-2}(2l+1)^{-3/2}\cdot (2k+1)^{3/2}\\
&\qquad\qquad\quad\cdot\sum_{N_1\geq N}N_1^{-\frac{1}{2}-(k-3)\frac{1-\delta}{2}}\left(\sum_{N_{k-1}\geq (2k-1)^3N_1^{1-\delta}}N_{k-1}^{-\frac{1}{2}+\frac{1-\delta}{2}}\right)\ \|v\|^{k+1}_{H^s}\\
&\ \ \ + \prod_{2\leq l\leq k-1}(2l+1)^{-3/2}\cdot (2k+1)^{3/2}\sum_{N_1\geq N}N_1^{-\frac{1}{2}-(k-2)\frac{1-\delta}{2}+\frac{1-\delta}{2}}\ \|v\|^{k+1}_{H^s}\\
&\ \lesssim \prod_{2\leq l\leq k-2}(2l+1)^{-3/2}\cdot (2k+1)^{3/2}\bigg((2k-1)^{-3\delta/2} N^{-\frac{1}{2}-(k-3)\frac{1-\delta}{2}-\frac{(1-\delta)\delta}{2}}\\
&\qquad\qquad\quad\qquad\qquad\qquad\qquad\qquad\quad+(2k-1)^{-3/2} N^{-\frac{1}{2}-(k-3)\frac{1-\delta}{2}}\bigg)\ \|v\|^{k+1}_{H^s}\\
&\ \lesssim \prod_{2\leq l\leq k-2}(2l+1)^{-3/2}\cdot (2k-1)^{-3\delta/2}(2k+1)^{3/2} N^{-\frac{k-2}{2}+\frac{k-3}{2}\delta}\ \|v\|^{k+1}_{H^s}.
\endaligned
\end{equation*}
This bound implies \eqref{n1k-2} by reasoning in the same way \eqref{n0k-2} was deduced from \eqref{Tv}. 
\end{proof}

Now, we have all the prerequisites to set up a contraction argument which proves that  \eqref{nfe} is LWP in $H^s$ when $s\geq 0$.

\begin{theorem}
If $s\geq 0$ and $r\geq 1$, then, for any $\|u_0\|_{H^s}\leq r$, there exist $T=T(r)>0$ and $N=N(r)>1$ such that the normal form equation \eqref{nfe} admits a unique solution $v\in X=C([-T,T]; H^s(\R))$ and the data-to-solution map
\begin{equation*}
u_0\in \{z;\ \|z\|_{H^s}\leq r\}\mapsto v\in X
\end{equation*}
is Lipschitz continuous.
\label{lwp-nfe}
\end{theorem}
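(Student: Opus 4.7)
The plan is to apply Banach's fixed point theorem to the map
\[
\Gamma(v)(t) \;=\; u_0 \,+\, \sum_{k=2}^\infty \bigl(N_0^{(k)}(v(t)) - N_0^{(k)}(u_0)\bigr) \,+\, \int_0^t \sum_{k=1}^\infty N_1^{(k)}(v(\tau))\,d\tau
\]
on the closed ball $B_R = \{v \in X : \|v\|_X \leq R\}$ with $R = 2r$. Fix $\delta \in (0,1)$ small (e.g.\ $\delta = 1/2$). Using Proposition \ref{prop-nk-hs} together with the observation in its proof that $|\gT(k)|=k!$ is absorbed by the factor $\prod_{2\leq\ell\leq k-1}(2\ell+1)^{-3/2}$, I would first show that for $v \in B_R$ the series converge in $H^s$ uniformly in $t$, with
\[
\Bigl\|\sum_{k \geq 2} N_0^{(k)}(v(t))\Bigr\|_{H^s} \lesssim N^{-1/2} R^2, \qquad \Bigl\|\sum_{k \geq 1} N_1^{(k)}(v(\tau))\Bigr\|_{H^s} \lesssim N^{1/2} R^2,
\]
where the first bound requires $N \geq C_0 R^{2/(1-\delta)}$, so that the geometric series with ratio $R\,N^{-(1-\delta)/2}$ converges, and in the second the dominant contribution is the $N^{1/2}R^2$ coming solely from the $k=1$ term, the remaining summands forming a similarly convergent geometric series.

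I would then fix parameters in this order: choose $N = N(r) \gg r^{2/(1-\delta)}$ large enough that $N^{-1/2} R^2 \leq r/10$, then choose $T = T(r) \sim (N^{1/2} r)^{-1}$ small enough that $T \cdot N^{1/2} R^2 \leq r/10$. Combining,
\[
\|\Gamma(v)\|_X \;\leq\; \|u_0\|_{H^s} + 2\cdot\tfrac{r}{10} + \tfrac{r}{10} \;\leq\; R,
\]
so $\Gamma$ maps $B_R$ into itself. Continuity of $\Gamma(v)(\cdot)$ into $H^s$ is a consequence of the continuity of $v$, dominated convergence for the time integral, and the uniform convergence of the series for $\sum_{k \geq 2} N_0^{(k)}(v(t))$.

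Replacing the estimates in Proposition \ref{prop-nk-hs} by their Lipschitz counterparts \eqref{n0k-3}, \eqref{n1k-3}, \eqref{n11-3} and repeating the identical bookkeeping yields
\[
\|\Gamma(v_1) - \Gamma(v_2)\|_X \;\lesssim\; \bigl(N^{-1/2}R \,+\, T N^{1/2} R\bigr)\|v_1 - v_2\|_X \;\leq\; \tfrac{1}{2}\|v_1 - v_2\|_X,
\]
possibly after a final shrinkage of $T$. Banach's theorem then provides the unique $v \in B_R$ with $\Gamma(v) = v$. Lipschitz continuity of the data-to-solution map follows by subtracting the fixed-point equations for $u_0$ and $u_0'$, using the difference estimates on the $N_1^{(k)}$ terms together with the Lipschitz dependence $\|N_0^{(k)}(u_0)-N_0^{(k)}(u_0')\|_{H^s}\lesssim N^{-1/2}r\|u_0-u_0'\|_{H^s}$ on the subtracted boundary terms. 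Uniqueness in the full space $X$ (and not merely in $B_R$) follows from a standard continuity bootstrap: any $X$-solution $\tilde v$ satisfies $\tilde v(0)=u_0$, hence lies in $B_R$ on a subinterval $[-T_1,T_1]\subseteq[-T,T]$ by continuity, whereupon the contraction estimate forces $\tilde v=v$ there, and a Gronwall-type argument applied to the $H^s$-norm of the difference propagates equality to all of $[-T,T]$.

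The main obstacle is the single term $N_1^{(1)}$ with its $N^{1/2}$ factor: unlike every other term in the infinite iteration, it does not gain smallness from large $N$, which is precisely what forces the local-in-time nature of the argument and rigidly couples the choices $N \sim r^{2/(1-\delta)}$ and $T \sim (N^{1/2} r)^{-1}$. Every other piece of the argument is a routine geometric-series summation against the bounds of Proposition \ref{prop-nk-hs}.
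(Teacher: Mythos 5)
Your proposal is correct and follows essentially the same route as the paper: a Banach fixed-point argument on the ball $B(0,2r)\subset X$, summing the geometric series furnished by Proposition \ref{prop-nk-hs} under the smallness condition $2rN^{-(1-\delta)/2}<1/2$, and then choosing $N=N(r)$ large and $T\sim (rN^{1/2})^{-1}$ small to absorb the non-decaying $N^{1/2}$ factor from the $N_1^{(1)}$ term. Your closing remarks on uniqueness in all of $X$ (rather than just in the ball) go slightly beyond what the paper records but do not alter the argument.
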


\begin{proof}
We proceed in the standard way and denote the right-hand side of \eqref{nfe} by $L_{u_0}= L_{u_0}(v)$ , for a fixed $u_0\in H^s$. The goal is to show that $L_{u_0}$ is a contraction map on a closed ball of $X$. In fact, if $\|u_0\|_{H^s}\leq r$, we prove that we can take this ball to be $B(0, 2r)=\{v; \|v\|_X\leq 2r\}$ by choosing $T$ and $N$ appropriately. 

For this purpose, we let $C>0$ be an absolute constant which is valid for all the estimates \eqref{n0k-2}-\eqref{n11-3} (i.e., one can replace $\lesssim$ by $\leq C\cdot$). A direct application of \eqref{n0k-2}-\eqref{n11-3} in the context of \eqref{nfe} yields
\begin{equation*}
\aligned
\|L_{u_0}(v)\|_X\leq \|u_0\|_{H^s}+\, &C\sum_{k\geq 2} N^{-\frac{k-1}{2}+\frac{k-2}{2} \delta}\left(\|v\|^k_X+\|u_0\|^k_{H^s}\right)\\ +\, &C\,T\left(N^{1/2}\|v\|^2_X+\sum_{k\geq 2} N^{-\frac{k-2}{2}+\frac{k-3}{2} \delta}\|v\|^{k+1}_X\right).
\endaligned
\end{equation*}
If we take $v\in B(0,2r)$ and $N$ to satisfy 
\[
2rN^{-\frac{1-\delta}{2}}<\frac{1}{2},
\]
then the above two power series are convergent and we infer that
\begin{equation*}
\|L_{u_0}(v)\|_X\leq r+C\left(10r^2N^{-1/2}+4r^2TN^{1/2}+16r^3TN^{-\delta/2}\right).
\end{equation*}
By further enforcing
\[
\frac{15C}{2}<N^{\delta/2} \qquad \text{and}\qquad T\leq \frac{1}{12CrN^{1/2}},
\]
we deduce that $L_{u_0}: B(0,2r)\to B(0,2r)$ is well-defined.

Next, arguing in similar fashion, we derive 
\begin{equation*}
\aligned
\|&L_{u_0}(v)-L_{u_0}(w)\|_X\\
&\quad \leq C\left(8rN^{-1/2}+4rTN^{1/2}+16r^2TN^{-\delta/2}\right) \|v-w\|_X, \quad (\forall)\, v, w\in B(0,2r).
\endaligned
\end{equation*}
If we also include the restrictions
\[
(24Cr)^2<N \qquad \text{and}\qquad (4r)^{\frac{2}{1+\delta}}<N,
\] 
the map $L_{u_0}$ becomes a contraction on $B(0,2r)$. Furthermore, without any extra adjustments to the values of $T$ and $N$, one obtains in the same manner that 
\begin{equation*}
\aligned
\|L_{u_0}(v)-L_{\widetilde{u}_0}(\widetilde{v})\|_X \leq \|u_0-\widetilde{u}_0\|+&\,\widetilde{C}\, \|v-\widetilde{v}\|_X,\\
&(\forall)\, \|u_0\|_{H^s},  \|\widetilde{u}_0\|_{H^s}\leq r, \ v, \widetilde{v}\in B(0,2r),
\endaligned
\end{equation*}
for some fixed $0<\widetilde{C}<1$. This proves the assertion about the data-to-solution map.
\end{proof}

%%%%%%%%%%%%%%%%%%%%%%%%%%%%%%%%%%%%%%%%%%%%%%

\section{Localized modulation estimates in $\mathcal{F}L^\infty$ and $H^s$}\label{mod-li-hs}
   
In this section, we develop modulation estimates which allow us to justify certain steps in the iteration scheme implemented in Section \ref{sect-iter}. For this purpose, we introduce the bilinear operators $N^{\al}_{j, \leq M}=N^{\al}_{j, \leq M}(v_1,v_2)$ and $I^{\al}_{j, >M}=I^{\al}_{j, >M}(v_1,v_2)$ by
\begin{equation}
\widehat{N^{\al}_{j, \leq M}(v_1,v_2)}(t,\xi)= \int\limits_{\stackrel{\xi=\xi_1+\xi_2}{|\Phi(\xi, \xi_1,\xi_2)-\al |\leq M}}|\xi|^s|\xi_j|^{1-s}\, e^{it\Phi(\xi, \xi_1,\xi_2)}\,\widehat{v_1}(\xi_1)\,\widehat{v_2}(\xi_2)
\label{najm}
\end{equation}
and
\begin{equation}
\widehat{I^{\al}_{j, > M}(v_1,v_2)}(t,\xi)= \int\limits_{\stackrel{\xi=\xi_1+\xi_2}{|\Phi(\xi, \xi_1,\xi_2)-\al |>  M}}|\xi|^s|\xi_j|^{1-s}\, \frac{e^{it\Phi(\xi, \xi_1,\xi_2)}}{\Phi(\xi, \xi_1,\xi_2)-\al }\,\widehat{v_1}(\xi_1)\,\widehat{v_2}(\xi_2),
\label{iajm}
\end{equation}
with obvious definitions for $N^{\al}_{j, M}=N^{\al}_{j, M}(v_1,v_2)$ and $I^{\al}_{j, M}=I^{\al}_{j, M}(v_1,v_2)$ in the spirit of the previous section. As before, $M\geq 1$ and $\al$ are real parameters, $\Phi$ is the modulation function, and $j=1$ or $2$.

\begin{prop}
If $0\leq s\leq \min\{1,\sigma\}$, then the following estimates are valid:
\begin{equation}
\|N^{\al}_{j, \leq  M}(v_1,v_2)(t)\|_{\mathcal{F}L^\infty}\lesssim M^{1/2}\|v_j\|_{\mathcal{F}L^\infty}\|v_{3-j}\|_{H^\sigma},\label{nmj}
\end{equation}
\begin{equation}
\|I^{\al}_{j, > M}(v_1,v_2)(t)\|_{\mathcal{F}L^\infty}\lesssim M^{-1/2}\|v_j\|_{\mathcal{F}L^\infty}\|v_{3-j}\|_{H^\sigma}.
\label{imj}
\end{equation}
$N^{\al}_{j, M}$ and $I^{\al}_{j, M}$ satisfy bounds identical to the ones for $N^{\al}_{j, \leq M}$ and $I^{\al}_{j, >M}$, respectively.
\label{prop-nmj}
\end{prop}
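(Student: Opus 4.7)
My plan is to mirror the proof of Proposition \ref{prop-nm}, adapted to handle the asymmetric weight $|\xi|^s|\xi_j|^{1-s}$ and the mixed-norm setting $\mathcal{F}L^\infty\times H^\sigma\to\mathcal{F}L^\infty$. By the symmetry of the integrand under swapping $\xi_1\leftrightarrow\xi_2$, I reduce to $j=1$. Since $\|\cdot\|_{\mathcal{F}L^\infty}$ is a pointwise supremum, I fix $\xi$ and estimate $|\widehat{N^\al_{1,\leq M}(v_1,v_2)}(t,\xi)|$ by taking absolute values in \eqref{najm}, pulling out $\|v_1\|_{\mathcal{F}L^\infty}=\|\widehat{v_1}\|_{L^\infty}$, and then applying Cauchy--Schwarz in $\xi_1$ to bring in $\|v_2\|_{H^\sigma}$ via the weight $\langle\xi-\xi_1\rangle^{-\sigma}$. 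This reduces \eqref{nmj} to showing
\[
\sup_\xi\int_{|\Phi(\xi,\xi_1,\xi-\xi_1)-\al|\leq M}\frac{|\xi|^{2s}|\xi_1|^{2(1-s)}}{\langle\xi-\xi_1\rangle^{2\sigma}}\,d\xi_1\lesssim M.
\]

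To verify this, I run a case analysis in the spirit of the proof of Proposition \ref{prop-nm}, based on the relative sizes of $|\xi|$, $|\xi_1|$, and $|\xi_2|=|\xi-\xi_1|$. The main tool in each regime is the bound on the measure of $\{\xi_1:|\Phi-\al|\leq M\}$ via the mean value theorem applied to $G(\xi_1)=\Phi(\xi,\xi_1,\xi-\xi_1)$ (see \eqref{g'}) or, after freezing $\xi_2$ instead, its companion $H$ (see \eqref{h'}). In the intermediate regime $\max(|\xi_1|,1)\ll|\xi|$ the bound $|G'|\sim\xi^4$ carries the argument, and the weight is controlled by $|\xi|^2$. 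In the balanced high-frequency regime $1\ll|\xi|\lesssim|\xi_1|\sim|\xi_2|$ the bound $|H'|\gtrsim|\xi_2|^3$ suffices together with the crude estimate $|\xi|^{2s}|\xi_1|^{2(1-s)}/\langle\xi_2\rangle^{2\sigma}\lesssim|\xi_2|^2$, which uses $s\leq 1$ and $\sigma\geq 0$.

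For \eqref{imj}, I first establish the analog for the dyadic annulus operator $I^\al_{j,M}$: since $|\Phi-\al|\sim M$ on its support, the factor $1/(\Phi-\al)$ provides a gain of $M^{-1}$ relative to the kernel of $N^\al_{j,M}$; combined with the $M^{1/2}$ bound for $N^\al_{j,M}$ (which itself follows from \eqref{nmj} via $N^\al_{j,M}=N^\al_{j,\leq 2M}-N^\al_{j,\leq M}$), this yields an $M^{-1/2}$ bound for $I^\al_{j,M}$. Dyadic summation over $M'\geq M$ then recovers \eqref{imj}.

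The main obstacle is the low-output regime $|\xi|\lesssim 1$ with $|\xi_1|\sim|\xi_2|\gg 1$. Here $|G'(\xi_1)|\sim|\xi||\xi_1|^3$ degenerates as $|\xi|\to 0$, and the asymmetric weight $|\xi|^{2s}|\xi_1|^{2(1-s)}$ does not collapse to a single frequency scale as in the symmetric case of Proposition \ref{prop-nm}. The argument must balance the prefactor $|\xi|^{2s}$, the Sobolev damping $\langle\xi_2\rangle^{-2\sigma}$, and the integration cutoff $|\xi_1|\lesssim(M/|\xi|)^{1/4}$ forced by $|\Phi-\al|\leq M$ together with the asymptotic $\Phi\sim 5\xi\xi_1^4$ valid in this regime; this is precisely where the hypothesis $s\leq\min\{1,\sigma\}$ enters the bookkeeping.
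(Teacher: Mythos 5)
Your overall strategy coincides with the paper's: the reduction of \eqref{nmj} by pulling out $\|\widehat{v_j}\|_{L^\infty}$ and applying Cauchy--Schwarz, the treatment of the regimes where one frequency dominates via the mean value theorem applied to $G$ from \eqref{g'}, and the derivation of \eqref{imj} from \eqref{nmj} through the dyadic pieces $N^{\al}_{j,M}$, $I^{\al}_{j,M}$ are all exactly what the paper does. The problem is the case you yourself single out, $|\xi|\lesssim 1\ll|\xi_1|\sim|\xi_2|$: you describe what ``must be balanced'' there but never close the estimate, and this is not a bookkeeping afterthought --- it is the crux of the proposition. After your reduction one must bound, uniformly in $\xi$ and $\al$, the $\xi_1$-integral of $|\xi|^{2s}|\xi_1|^{2(1-s)}\langle\xi_2\rangle^{-2\sigma}$ over the level set $\{\xi_1:\,|\Phi(\xi,\xi_1,\xi-\xi_1)-\al|\leq M\}$ with $\xi$ \emph{fixed}; along that line the relevant derivative is $G'\sim|\xi|\,|\xi_1|^3$, which degenerates as $|\xi|\to 0$, exactly as you note. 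Your proposed cutoff $|\xi_1|\lesssim(M/|\xi|)^{1/4}$ is only meaningful for $\al$ near $0$ (for general $\al$ the level set is an annulus $|\xi_1|^4\approx \al/(5\xi)$, which can sit at arbitrarily high frequency and have large measure), and even for $\al=0$ the bookkeeping does not close throughout the stated range: take $\beta=0$, $\al=0$, $M=1$, $s=\sigma=0$, $\xi=R^{-4}$, $\widehat{v_1}=\bc_{[1,R]}$, $\widehat{v_2}=R^{-1/2}\bc_{[-R,-1]}$. Then $|\Phi|\approx 5|\xi|\,\xi_1^4\leq 1$ on essentially all of $1\leq\xi_1\leq R/2$, so $|\widehat{N^{0}_{1,\leq 1}(v_1,v_2)}(0,\xi)|\gtrsim R^{-1/2}\int_2^{R/2}\xi_1\,d\xi_1\sim R^{3/2}$, while $M^{1/2}\|v_1\|_{\mathcal{F}L^\infty}\|v_2\|_{H^0}=O(1)$. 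So the hypothesis $s\leq\min\{1,\sigma\}$ alone cannot rescue this case; some genuinely new input is required, and your sketch does not supply it.

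For comparison, the paper disposes of this regime by ``fixing $\xi_2$'' and invoking $|H'(\xi_1)|\sim\xi_2^4$ from \eqref{h'} to claim the $\xi_1$-set has measure $\lesssim M/\xi_2^4$. You should scrutinize that step with the same skepticism you apply to your own: $H$ is the restriction of $\Phi$ to a line $\{\xi_2=\mathrm{const}\}$, whereas the set whose measure is needed here lives on the line $\{\xi_1+\xi_2=\xi\}$ with $\xi$ pinned by the $\mathcal{F}L^\infty$ norm, along which the derivative is $G'$; the two-dimensional Cauchy--Schwarz that legitimizes freezing $\xi_2$ in the proof of \eqref{nm} is not available in this mixed-norm setting. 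In short, your proposal is incomplete precisely where the proposition is most delicate, the ingredients you list for that case do not combine to give the claimed uniform bound near the endpoint $s=\sigma=0$ (the case needed for Theorem \ref{main-th}), and resolving it will require either a restriction on $s+\sigma$, an additional cancellation or frequency restriction coming from how these operators are actually composed in Section 6, or a genuinely different argument.
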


\begin{proof}
First, we notice that we only need to prove \eqref{nmj}, since the other estimates in the proposition are derived from it in the same way \eqref{im} and the corresponding bounds for $N^\alpha_M$ and $I^\alpha_M$ in Proposition \ref{prop-nm} are derived from \eqref{nm}. We assume, without loss of generality, that $j=1$ and, in this case, it is easy to see by duality that \eqref{nmj} is the consequence of 
\begin{equation}
\aligned
\sup_{\xi\in\R}\ \int\limits_{\xi=\xi_1+\xi_2}\bc_{\{|\Phi(\xi, \xi_1,\xi_2)-\al |\leq M\}}\,\frac{|\xi|^s|\xi_1|^{1-s}}{\langle \xi_2\rangle^{\sigma}} \,w(\xi_2)
\lesssim M^{1/2} \|w\|_{L^2}, 
\endaligned
\label{nmjw}
\end{equation}
with $w\in L^2$ being a non-negative function. We demonstrate this inequality by analyzing the following complementary scenarios: 
\[
\aligned |\xi_1|\lesssim 1, \qquad \big(\max\{|\xi_2|,1\}&\ll |\xi_1| \ \text{or}\ 1\ll |\xi_1|\ll |\xi_2|\big), \qquad |\xi|\lesssim 1\ll |\xi_1|\sim |\xi_2|,\\
&\text{and} \qquad 1\ll |\xi|\lesssim |\xi_1|\sim |\xi_2|.
\endaligned
\]

\underline{Case: $|\xi_1|\lesssim 1$.} The triangle inequality implies $|\xi|\lesssim \langle \xi_2\rangle$ and, consequently, we have
\begin{equation*}
\frac{|\xi|^s|\xi_1|^{1-s}}{\langle \xi_2\rangle^{\sigma}}\lesssim 1.
\end{equation*}
Then, by using the Cauchy-Schwarz inequality, we derive that
\begin{equation*}
\aligned
\text{LHS of \eqref{nmjw}}\lesssim \sup_{\xi\in \R}\left(\ \int\limits_{|\xi_1|\lesssim 1} w^2(\xi-\xi_1)d\xi_1\right)^{1/2}\lesssim \|w\|_{L^2},
\endaligned
\end{equation*}
which proves the claim.

\underline{Case: $\big(\max\{|\xi_2|,1\}\ll |\xi_1|$ or $1\ll |\xi_1|\ll |\xi_2|\big)$.} In this scenario, the triangle inequality yields 
\begin{equation}
\max\left\{1, \min\{|\xi_1|, |\xi_2|\}\right\}\ll \max\{|\xi_1|, |\xi_2|\}\sim |\xi|, 
\label{min-max}
\end{equation}
and, hence, we obtain that
\begin{equation*}
\frac{|\xi|^s|\xi_1|^{1-s}}{\langle \xi_2\rangle^{\sigma}}\lesssim |\xi|.
\end{equation*}
Following this, we proceed like in the \underline{Case $\max\{|\xi_2|,1\}\ll |\xi|$} of the argument for \eqref{nm}. We fix $\xi$, take $G(\xi_1)=\Phi(\xi_1,\xi-\xi_1, \xi)$, and, based on \eqref{g'} and \eqref{min-max}, infer that
\begin{equation*}
|G'(\xi_1)|=\big|\xi_1^2-\xi_2^2\big|(5(\xi_1^2+\xi_2^2)+3\beta)\sim \max\big\{\xi^4_1, \xi^4_2\big\}\sim \xi^4.
\end{equation*}
As argued before, we deduce that the domain in which $\xi_1$ varies has size $\lesssim M/\xi^4$. By invoking the Cauchy-Schwarz inequality, we finally derive that
\begin{equation*}
\aligned
\text{LHS of \eqref{nmjw}}&\lesssim \sup_{\xi\in\R}\left(\ \int\limits_{\xi=\xi_1+\xi_2}\bc_{\{|\Phi(\xi, \xi_1,\xi_2)-\al |\leq M\}}\,\frac{|\xi|^{2s}|\xi_1|^{2-2s}}{\langle \xi_2\rangle^{2\sigma}}\right)^{1/2}\   \|w\|_{L^2}\\
&\lesssim \sup_{|\xi|\gg 1}\left(\xi^2 \int_\R\bc_{\{|G(\xi_1)-\al |\leq M\}\cap \{|G'(\xi_1)|\sim \xi^4\}}\,d\xi_1\right)^{1/2}\   \|w\|_{L^2}\\
&\lesssim \sup_{|\xi|\gg 1}\frac{M^{1/2}}{|\xi|} \  \|w\|_{L^2}\sim M^{1/2}\|w\|_{L^2},
\endaligned
\end{equation*}
which demonstrates the assertion.

\underline{Case: $|\xi|\lesssim 1\ll |\xi_1|\sim |\xi_2|$.} For this case, we have that
\begin{equation*}
\frac{|\xi|^s|\xi_1|^{1-s}}{\langle \xi_2\rangle^{\sigma}}\lesssim |\xi_2|^{1-s-\sigma}.
\end{equation*}
Next, we argue like in the \underline{Case: $1\ll |\xi|\lesssim |\xi_1|\sim |\xi_2|$} of the proof of \eqref{nm}. We fix $\xi_2$, write $H(\xi_1)=\Phi(\xi_1,\xi_2, \xi_1+\xi_2)$, and, using \eqref{h'}, infer that
\begin{equation*}
|H'(\xi_1)|=|\xi_2||\xi+\xi_1|(5(\xi_1^2+\xi^2)+3\beta)\sim |\xi_2||\xi_1|^3\sim \xi_2^4.
\end{equation*}
Consequently, the set in which $\xi_1$ takes values has size $\lesssim M/\xi_2^4$. By applying the Cauchy-Schwarz inequality as in the previous case, we deduce that
\begin{equation*}
\aligned
\text{LHS of \eqref{nmjw}}&\lesssim \sup_{|\xi_2|\gg 1}\left(|\xi_2|^{2-2s-2\sigma} \int_\R\bc_{\{|H(\xi_1)-\al |\leq M\}\cap \{|H'(\xi_1)|\sim \xi_2^4\}}\,d\xi_1\right)^{1/2}\   \|w\|_{L^2}\\
&\lesssim \sup_{|\xi_2|\gg 1}\frac{M^{1/2}}{|\xi_2|^{1+s+\sigma}} \  \|w\|_{L^2}\sim M^{1/2}\|w\|_{L^2},
\endaligned
\end{equation*}
which yields the claim.

\underline{Case: $1\ll |\xi|\lesssim |\xi_1|\sim |\xi_2|$.} In this instance, we have that 
\begin{equation*}
\frac{|\xi|^s|\xi_1|^{1-s}}{\langle \xi_2\rangle^{\sigma}}\lesssim |\xi_1|^{1-\sigma}\sim |\xi_2|^{1-\sigma}
\end{equation*}
and we proceed in identical fashion with the way we argued for \eqref{nm} under the same assumptions. We infer that
\begin{equation*}
\max\{|\xi+\xi_1|, |\xi+\xi_2|\}\gtrsim |\xi|\gg 1
\end{equation*}
and choose to work with $\max\{|\xi+\xi_1|, |\xi+\xi_2|\}=|\xi+\xi_1|$. We claim that when $\max\{|\xi+\xi_1|, |\xi+\xi_2|\}=|\xi+\xi_2|$ the analysis is similar, by interchanging the roles of $\xi_1$ and $\xi_2$. Hence, yet again, we fix $\xi_2$, consider $H=H(\xi_1)$, and, using \eqref{h'}, derive that $|H'(\xi_1)|\gtrsim |\xi_2|^3$. Now, the outcome is that the domain in which $\xi_1$ varies has size $\lesssim M/|\xi_2|^3$.
Therefore, much in the same spirit with the previous case, we obtain
\begin{equation*}
\aligned
\text{LHS of \eqref{nmjw}}&\lesssim \sup_{|\xi_2|\gg 1}\left(|\xi_2|^{2-2\sigma} \int_\R\bc_{\{|H(\xi_1)-\al |\leq M\}\cap \{|H'(\xi_1)|\sim |\xi_2|^3\}}\,d\xi_1\right)^{1/2}\   \|w\|_{L^2}\\
&\lesssim \sup_{|\xi_2|\gg 1}\frac{M^{1/2}}{|\xi_2|^{1/2+\sigma}} \  \|w\|_{L^2}\sim M^{1/2}\|w\|_{L^2},
\endaligned
\end{equation*}
which finishes the proof of \eqref{nmj}.
\end{proof}

%%%%%%%%%%%%%%%%%%%%%%%%%%%%%%%%%%%%%%%%%%%%%%

\section{Conclusion of the argument for Theorem \ref{main-th}}

In this final section, we complete the argument for our main result by justifying the heuristics in Section \ref{just}, which connect the Cauchy problem \eqref{main} to the normal form equation \eqref{nfe}. This process is very similar to the one used in Section 4.2 (and the related Section 4.1) of \cite{KOY18} to justify identical claims for \eqref{kdv}. This is why we present here only the key elements of this procedure and ask the reader to fill in the details by following the line of reasoning in \cite{KOY18}.

First, in order to justify the application of Leibniz's rule with respect to $\tau$, it is enough to show that $t\mapsto \widehat{v}(t,\xi)$ is a $C^1$-function for each fixed $\xi$. On the basis of \eqref{ir} and \eqref{v1-2}, we infer that
\begin{equation*}
\partial_t\widehat{v}(t,\xi)=-2\pi i\,\xi \,e^{-it(\xi^5+\beta\xi^3)}\,\widehat{u^2}(t,\xi).
\end{equation*}
However, if $u\in C(\R; H^s(\R))$ with $s\geq 0$, then $u^2\in C(\R; L^1(\R))$ and the desired claim follows according to the Riemann-Lebesgue lemma.

Secondly, in order to explain rigorously why we can commute differentiation with integration in the derivation of \eqref{nfe-k} and why this equation is a true approximation for \eqref{nfe}, we argue that it is enough to demonstrate the following result\footnote{This is the counterpart of Lemma 4.8 in \cite{KOY18}, used to justify the same heuristics in the case of \eqref{kdv}.}.

\begin{prop}
If $0\leq s\leq 1$, $N>1$, and $0<\delta<1$, then the following estimates are valid for all $k\geq 2$:
\begin{equation}
\left|\widehat{N^{(k)}(v)}(t,\xi)\right|\lesssim |\xi|^{1-s}N^{-\frac{k-1}{2}+\frac{k-2}{2} \delta}\|v(t)\|^{k+1}_{H^s},
\label{nk-fli}
\end{equation}
\begin{equation}
\left|\widehat{N^{(k)}_2(v)}(t,\xi)\right|\lesssim |\xi|^{1-s} N^{-\frac{k-1}{2}+\frac{k-2}{2} \delta}\|v(t)\|^{k+1}_{H^s}.
\label{nk-2}
\end{equation}
\label{prop-nk}
\end{prop}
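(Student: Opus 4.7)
The plan is to establish \eqref{nk-fli} directly; the bound \eqref{nk-2} then follows immediately, since the integrand defining $N^{(k)}_2(v)$ differs from that of $N^{(k)}(v)$ only by restriction to the smaller set $C_0\cap\bigcap_{j=1}^{k-1}C_j^c \subset C_0\cap\bigcap_{j=1}^{k-2}C_j^c$, so the triangle inequality applied inside the integral transfers any pointwise upper bound from $|\widehat{N^{(k)}(v)}|$ to $|\widehat{N^{(k)}_2(v)}|$.

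The proof of \eqref{nk-fli} runs along the same lines as Proposition \ref{prop-nk-hs}, but with the $H^s$-estimates from Proposition \ref{prop-nm} replaced by the mixed-norm estimates of Proposition \ref{prop-nmj}. As in Proposition \ref{prop-nk-hs}, since the combinatorial factor $|\gT(k)|=k!$ will be absorbed by $\prod_{2\leq l\leq k-1}(2l+1)^{-3/2}$, it suffices to analyze a representative term corresponding to the right-linear tree, writing it as a nested composition of $k-1$ integration-by-parts bilinear operators followed by one unrestricted bilinear at the innermost level. The factor $|\xi|^{1-s}$ in the conclusion is extracted pointwise by splitting the outermost frequency factor as $i\xi^{(1)} = i\xi = i\,\sgn(\xi)\cdot|\xi|^{1-s}\cdot|\xi|^s$; the residual $|\xi|^s$ then matches the weight $|\xi|^s|\xi_j|^{1-s}$ of the operator $I^0_{j,>N}$ defined in \eqref{iajm}, with $j$ pointing along the right-linear path. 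Iterating Proposition \ref{prop-nmj} along this path with $\sigma = s$, each outer level $l$ contributes a factor $M_l^{-1/2}\|v(t)\|_{H^s}$, with the $\mathcal{F}L^\infty$ output of each recursion level passed as the $\mathcal{F}L^\infty$ input to the enclosing level. Summing dyadically over $N_l\sim|\tm_l|$ subject to the constraints $N_l\geq(2l+1)^3 M_{l-1}^{1-\delta}$ then produces, exactly as in the computation leading to \eqref{Tv}, the exponent $-\frac{k-1}{2}+\frac{k-2}{2}\delta$.

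The main obstacle lies at the innermost level, where neither a modulation cutoff nor a recursive $\mathcal{F}L^\infty$ input is directly available to drive Proposition \ref{prop-nmj} in its stated form, since both inputs to the innermost bilinear operator are plain copies of $v$. The resolution I would adopt is to fuse the innermost bilinear operator with its parent, treating the two together as a single trilinear operator, and to exploit the outer restriction $|\tm_{k-1}|\geq(2k-1)^3 M_{k-2}^{1-\delta}$; since $\tm_{k-1}=\tm_{k-2}+\mu_{k-1}$ and $\mu_{k-1}$ depends on both $\xi^{(k-1)}$ and its children $\xi^{(k-1)}_1,\xi^{(k-1)}_2$ (one of which is $\xi^{(k)}$), this inequality in fact localizes jointly in the level-$(k-1)$ and level-$k$ variables. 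A multilinear variant of the Cauchy-Schwarz argument \eqref{nmjw} developed inside the proof of Proposition \ref{prop-nmj} then bounds the fused trilinear operator in $\mathcal{F}L^\infty$ with both innermost $v$-factors in $H^s$, supplying the requisite $M_{k-1}^{-1/2}$ decay and thereby avoiding any appeal to the (false, for $s\leq 1/2$) embedding $H^s\hookrightarrow\mathcal{F}L^\infty$. This closes the iteration and yields \eqref{nk-fli}, from which \eqref{nk-2} follows as noted above.
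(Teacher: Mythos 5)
Your overall architecture is the paper's: factor out $|\xi|^{1-s}$ from the root weight, rewrite the generic integral term as a composition of the operators \eqref{iam} and \eqref{iajm} along the path from the root to the $k$th-generation node (with Proposition \ref{prop-nmj} applied with $\sigma=s$ on-path and the $\mathcal{F}L^\infty$ output of each level fed into the next), and then sum dyadically exactly as in \eqref{Tv}; your reduction of \eqref{nk-2} to \eqref{nk-fli} by shrinking the domain of integration is also how the paper treats it. You have, moreover, correctly located the one nontrivial point: the innermost bilinear operation (the $k$th generation) carries no modulation cutoff, no denominator, and has two plain $H^s$ inputs, so Proposition \ref{prop-nmj} cannot be applied there and $H^s\not\hookrightarrow\mathcal{F}L^\infty$.

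Where your proposal falls short of a proof is precisely at that point. You assert that a ``multilinear variant of the Cauchy--Schwarz argument \eqref{nmjw}'' bounds a fused trilinear operator, exploiting that the restriction $|\tm_{k-1}|\geq(2k-1)^3M_{k-2}^{1-\delta}$ ``localizes jointly'' in the level-$(k-1)$ and level-$k$ variables; but this estimate is never proved, and the proposed mechanism is off: $\tm_{k-1}$ depends on $\xi^{(k)}=\xi^{(k)}_1+\xi^{(k)}_2$ only through the sum, so it imposes no constraint whatsoever on the innermost integration, and no decay is needed (or available) from the $k$th level --- the $k-1$ factors $N_l^{-1/2}$ coming from the denominators $\tm_1,\dots,\tm_{k-1}$ already produce the exponent $-\frac{k-1}{2}+\frac{k-2}{2}\delta$. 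The paper's resolution is much simpler than what you sketch: it introduces the unrestricted bilinear operator $H$ with symbol $\sgn(\eta)|\eta|^s$ at the innermost node and proves the elementary bound \eqref{hs-hs} (i.e.\ $\|H(v_1,v_2)\|_{\mathcal{F}L^\infty}\lesssim\|v_1\|_{H^s}\|v_2\|_{H^s}$, just $|\eta|^s\lesssim|\eta_1|^s+|\eta_2|^s$ plus Cauchy--Schwarz), after which your ``fused trilinear operator'' is literally the composition of $I^{-\tm_{k-2}}_{j,>\cdot}$ with $H$ and its bound follows from \eqref{imj} together with \eqref{H-fli}. So your claimed trilinear estimate is true, but the key two-line lemma that makes it true is missing from your write-up. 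A secondary omission: restricting to the right-linear tree hides the fact that for general trees the off-path children are roots of nontrivial subtrees, which must be measured in $H^s$ via Proposition \ref{prop-nm} (as in the paper's $k=3$ example, where one slot of the root operator is $I^{-\tm_1}_{>5^3|\tm_1|^{1-\delta}}(v,v)$); this is routine but should be said.
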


In addition to the modulation estimates in Proposition \ref{prop-nm} and Proposition \ref{prop-nmj}, we need one more ingredient for the proof of the above proposition, which is the following bilinear estimate.

\begin{prop}
If $s\geq 0$, then
\begin{equation}
\sup_{\xi\in \R}\left|\ \int\limits_{\xi=\xi_1+\xi_2}|\xi|^s\,\widehat{v_1}(\xi_1)\,\widehat{v_2}(\xi_2)\right|\lesssim \|v_1\|_{H^s}\|v_2\|_{H^s}
\label{hs-hs}
\end{equation}
holds true.
\end{prop}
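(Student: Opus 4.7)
The plan is to reduce \eqref{hs-hs} to a single application of Cauchy--Schwarz after absorbing the weight $|\xi|^s$ onto the factors $\widehat{v_1}$ and $\widehat{v_2}$.

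First, since $\xi=\xi_1+\xi_2$ and $s\geq 0$, the triangle inequality $|\xi|\leq |\xi_1|+|\xi_2|$ together with the elementary bound $(a+b)^s\lesssim a^s+b^s$ (split into the cases $s\leq 1$ and $s\geq 1$ if desired) yields
\begin{equation*}
|\xi|^s\,\lesssim\, \langle \xi_1\rangle^s+\langle \xi_2\rangle^s.
\end{equation*}
Consequently, the integrand on the left-hand side of \eqref{hs-hs} is dominated, up to an absolute constant, by
\begin{equation*}
\langle\xi_1\rangle^s|\widehat{v_1}(\xi_1)|\,|\widehat{v_2}(\xi_2)|\,+\,|\widehat{v_1}(\xi_1)|\,\langle\xi_2\rangle^s|\widehat{v_2}(\xi_2)|,
\end{equation*}
so it suffices to estimate each of these two terms after integration, uniformly in $\xi$.

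For the first term, parametrize $\xi_2=\xi-\xi_1$ and apply the Cauchy--Schwarz inequality in $\xi_1$ to obtain
\begin{equation*}
\int_\R \langle\xi_1\rangle^s|\widehat{v_1}(\xi_1)|\,|\widehat{v_2}(\xi-\xi_1)|\,d\xi_1\,\leq\,\|\langle\xi_1\rangle^s \widehat{v_1}\|_{L^2_{\xi_1}}\,\|\widehat{v_2}(\xi-\cdot)\|_{L^2}\,=\,\|v_1\|_{H^s}\|v_2\|_{L^2},
\end{equation*}
and the right-hand side is bounded by $\|v_1\|_{H^s}\|v_2\|_{H^s}$ since $s\geq 0$. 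The second term is handled symmetrically, by swapping the roles of $\xi_1$ and $\xi_2$ (or equivalently parametrizing by $\xi_2$). Both bounds are independent of $\xi$, so taking the supremum over $\xi\in\R$ completes the proof.

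There is no genuine obstacle here: the dispersion relation $\Phi$ plays no role (no modulation cut-off appears in \eqref{hs-hs}), no resonance analysis is needed, and the estimate is merely a convolution bound $\|\widehat{fg}\|_{L^\infty}\leq \|\widehat f\|_{L^2}\|\widehat g\|_{L^2}$ combined with a subadditivity-type distribution of the weight $|\xi|^s$ onto the two factors. The only minor point to keep in mind is that the splitting of the weight requires $s\geq 0$ (to pass from $|\xi|^s$ to $\langle\xi_j\rangle^s$), which is exactly the hypothesis of the proposition.
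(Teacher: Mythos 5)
Your proof is correct and follows essentially the same route as the paper: distribute the weight via $|\xi|^s\lesssim|\xi_1|^s+|\xi_2|^s$ (the paper uses the homogeneous weights, you use $\langle\cdot\rangle^s$, which is immaterial) and then apply Cauchy--Schwarz to each resulting convolution term. No discrepancies worth noting.
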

\begin{proof}
The argument is straightforward, as $s\geq 0$ implies 
\[
|\xi|^s\lesssim |\xi_1|^s+|\xi_2|^s
\]
and, subsequently, an application of the Cauchy-Schwarz inequality yields
\[
\aligned
\text{LHS of \eqref{hs-hs}}&\lesssim \left\||\xi_1|^s\,\widehat{v_1}(\xi_1)\right\|_{L_{\xi_1}^2}\|\widehat{v_2}\|_{L^2}+ \|\widehat{v_1}\|_{L^2}\left\||\xi_2|^s\,\widehat{v_2}(\xi_2)\right\|_{L_{\xi_2}^2}\lesssim \|v_1\|_{H^s}\|v_2\|_{H^s}.
\endaligned
\]
\end{proof}

Next, we finish this section and the entire paper by providing:

\begin{proof}[Sketch of proof for Proposition \ref{prop-nk}]
The argument is similar to the one for Proposition \ref{prop-nk-hs}. As was the case there, the key observation is that, according to  \eqref{nk} and \eqref{n2k}$, \widehat{N^{(k)}}$ and $\widehat{N^{(k)}_2}$ are linear combinations with constant coefficients of integral terms which can be written conveniently using the operators defined by \eqref{iam} and \eqref{iajm}.

For example, one integral term featured in the summation formula for $\widehat{N^{(3)}}$ is
\begin{equation*}\aligned
\mathcal{I}=-&\int\limits_{\stackrel{\xi=\xi_1+\xi_2}{\stackrel{\xi_1=\xi_3+\xi_4}{\xi_2=\xi_5+\xi_6}}} \bc_{C_0\cap C_1^c}\,i\xi\,i\xi_1\,i\xi_2\,\frac{e^{i\tau\tm_3}}{i\tm_1i\tm_2}\prod_{3\leq a\leq 6}\,\widehat{v}(\tau,\xi_a)
=-i\sgn(\xi)|\xi|^{1-s}\\
&\cdot\int\limits_{\stackrel{\xi=\xi_1+\xi_2}{|\tm_1|>N}} \bigg\{|\xi|^s|\xi_2|^{1-s}\frac{e^{i\tau\tm_1}}{\tm_1}\int\limits_{\stackrel{\xi_1=\xi_3+\xi_4}{|\mu_2+\tm_1|>5^3|\tm_1|^{1-\delta}}} \xi_1\frac{e^{i\tau\mu_2}}{\mu_2+\tm_1}\widehat{v}(\tau,\xi_3)\widehat{v}(\tau,\xi_4)\\
&\qquad\qquad\ \cdot\int\limits_{\xi_2=\xi_5+\xi_6} \sgn(\xi_2)|\xi_2|^{s}e^{i\tau\mu_3}\widehat{v}(\tau,\xi_5)\widehat{v}(\tau,\xi_6)\bigg\},
\endaligned
\end{equation*}
with
\begin{equation*}
\tm_1=\Phi(\xi,\xi_1, \xi_2), \quad\mu_2=\Phi(\xi_1,\xi_3, \xi_4), \quad\mu_3=\Phi(\xi_2,\xi_5, \xi_6).
\end{equation*}
If one defines the operator $H=H(v_1,v_2)$ by
\begin{equation*}
\widehat{H(v_1,v_2)}(t,\eta)=\int\limits_{\eta=\eta_1+\eta_2} \sgn(\eta)|\eta|^{s}e^{it\Phi(\eta,\eta_1,\eta_2)}\widehat{v_1}(\eta_1)\widehat{v_2}(\eta_2),
\end{equation*}
then, according to \eqref{iam} and \eqref{iajm}, we can write
\begin{equation*}\aligned
\mathcal{I}&=-i\sgn(\xi)|\xi|^{1-s}\int\limits_{\stackrel{\xi=\xi_1+\xi_2}{|\tm_1|>N}} \bigg\{|\xi|^s|\xi_2|^{1-s}\frac{e^{i\tau\tm_1}}{\tm_1} \mathcal{F}\left(I^{-\tm_1}_{>5^3|\tm_1|^{1-\delta}}(v(\tau),v(\tau))\right)(\tau,\xi_1)\\
&\qquad\qquad\qquad\qquad\qquad\quad\cdot \mathcal{F}\left(H(v(\tau),v(\tau))\right)(\tau,\xi_2)\bigg\}\\
&=-i\sgn(\xi)|\xi|^{1-s}\mathcal{F}\left(I^0_{2,>N}\left(I^{-\tm_1}_{>5^3|\tm_1|^{1-\delta}}(v(\tau),v(\tau))(\tau), H(v(\tau),v(\tau))(\tau)\right)\right)(\tau,\xi).
\endaligned
\end{equation*}
Furthermore, due to \eqref{hs-hs}, we deduce
\begin{equation}
\|H(v_1,v_2)(t)\|_{\mathcal{F}L^\infty}\lesssim \|v_1\|_{H^s}\|v_2\|_{H^s}.\label{H-fli}
\end{equation}
Therefore, if we also factor in \eqref{imj} and \eqref{im}, then we derive
\begin{equation*}
\aligned
|\mathcal{I}|&\lesssim |\xi|^{1-s}N^{-1/2}\|I^{-\tm_1}_{>5^3N^{1-\delta}}(v(\tau),v(\tau))(\tau)\|_{H^s}\|H(v(\tau),v(\tau))(\tau)\|_{\mathcal{F}L^\infty}\\
&\lesssim |\xi|^{1-s}N^{-1+\frac{\delta}{2}}\|v(\tau)\|^4_{H^s},
\endaligned
\end{equation*}
which matches exactly the estimate \eqref{nk-fli} for $k=3$.

In general, the typical integral term in the formula for $N^{(k)}$ can be written as a composition of operators of the types
\[
I^{-\tm_{l}}_{> (2l+3)^3\max\{|\tm_1|, |\tm_{l}|\}^{1-\delta}}, \qquad I^{-\tm_{l}}_{> (2l+3)^3\max\{|\tm_1|, |\tm_{l}|\}^{1-\delta}}, \qquad H,
\]
with $1\leq l\leq k-2$ and $1\leq j\leq 2$. For $N^{(k)}_2$, the only addition would be a further modulation restriction imposed by $\bc_{C^c_{k-1}}$ on $H$, which doesn't bring any additional complications. All these considerations can be made rigorous by working with the concept of \emph{shortest path} between root nodes of a chronicle of $k$ generations. We refer the reader to Remark 3.6 and the formula (4.18) in \cite{KOY18} for precise details in the context of \eqref{kdv}. Following this, one takes advantage of \eqref{imj}, \eqref{im} and \eqref{H-fli} to obtain the estimate for the individual integral term. Finally, \eqref{nk-fli} and \eqref{nk-2} are derived in the same way \eqref{n0k-2} is deduced from \eqref{Tv}, which is a bound for one of the individual terms in the formula for $N^{(k)}_0$. 
\end{proof}
%%%%%%%%%%%%%%%%%%%%%%%%%%%%%%%%%%%%%%%%%%%%%%

\section*{Acknowledgements}
The first author was supported in part by a grant from the Simons Foundation $\#\, 359727$. Both authors are grateful to Alex Iosevich for helpful discussions during the preparation of this manuscript.

%%%%%%%%%%%%%%%%%%%%%%%%%%%%%%%%%%%%%%%%%%%%%%

\bibliographystyle{amsplain}
\bibliography{bousbib}

\end{document}